\newtheorem{thm}{Theorem}{\bf}{\it}
\newtheorem{rmk}{Remark}{\bf}{\it}
\newtheorem{prop}{Proposition}{\bf}{\it}
\newtheorem{lem}{Lemma}{\bf}{\it}
\begin{document}

\title{Global Existence of the Non-isothermal Poisson--Nernst--Planck--Fourier System\thanks{This work was funded by the NSF grant DMS-1759536 (C. Liu), DMS-1729589 (P. Liu).}}

\author{Chia-Yu Hsieh\thanks{Department of Mathematics, The Chinese University of Hong Kong, Shatin, N.T., Hong Kong, email: {\tt cyhsieh@math.cuhk.edu.hk}} \and Tai-Chia Lin\thanks{Department of Mathematics, Center for Advanced Study in Theoretical Sciences (CASTS), National Center for Theoretical Sciences (NCTS), National Taiwan University, Taipei 10617, Taiwan, email: {\tt tclin@math.ntu.edu.tw}} \and Chun Liu\thanks{Department of Applied Mathematics, Illinois Institute of Technology, Chicago, IL 60616, USA, email: {\tt cliu124@iit.edu}} \and Pei Liu\thanks{School of Mathematics, University of Minnesota, Twin Cities, MN 55455, USA, email: {\tt liu01304@umn.edu}}}
\date{}

\allowdisplaybreaks[1]

\maketitle

\begin{abstract}
In this paper, we consider a non-isothermal electrokinetic model, which is derived from the Energetic Variational Approach. The charge transport is described through the Poisson--Nernst--Planck equations with variable temperature, and the heat flux satisfies the Fourier's law. This Poisson--Nernst--Planck--Fourier model satisfies both the first law and second law of thermodynamics as well as the Onsager's reciprocal relations, thus it is thermodynamic-consistent. Finally, we prove the global well-posedness for this model under the smallness assumption of the initial data by the energy method.
\end{abstract}

\noindent \textbf{Keywords:} Non-Isothermal Fluid, Variational Principle, Global Existence \\
\noindent \textbf{Mathematics Subject Classification (2010):} 82C21, 82D15, 35Q92, 49S05

\section{Introduction}
\label{intro}
The classical Poisson--Nernst--Planck (PNP) theory as well as its various modifications \cite{Schuss2001,Gillespie2002,CKC:BJ:03,Li:N:2009,HEL:CMS:11,HTLE:JPC:12,Wei2012,XML:PRE:2014,LiujiXu2016,Wang2016,Chen2017,Song2018} have been widely studied in describing the dynamics of mobile charges in electrolytes, with real applications such as the energy devices \cite{WVP:EA:11,Soestbergen2010,Lai2011,Kondrat2013,Pilon2015,Ike2016} and the biological ion channels \cite{CCK:BJ:00,GKCN:JPC:04,Singer2008,Lu2010,Gardner2011,Liu2014,Ji2015}. The PNP theory assumes the physical system to be isothermal and described using a constant temperature, which might work well for those systems with negligible temperature variations. However, the electric current is usually accompanied with the Joule heating effect, resulting in considerable temperature changes. Moreover, the thermoelectric effects indicate there exists direct conversion between the temperature gradient and the voltage gradient. The coupling between the temperature evolution and the electric currents is very important in understanding the dynamic properties of the electrolyte systems and has attracted extensive interests in recent years \cite{Kafoussias1995,Xu2012a,Das2014,Berrueta2014,Entremont2015,Hooshmand2015,Sarwar2016,Jiang2016,Janssen2017,Ma2018}.

In literature, there are many works on the analytical properties of the classical PNP equations and its modifications, such as the well-poseness \cite{Mock1974a,Ogawa2008,Deng13,Kinderlehrer2017,Hsieh2019a}, long time behavior \cite{Mock1975,Arnold1999,Poincar2000,Hsieh2015a}, singular pertubation \cite{Singer2008,Liu2015e,Ji2018,Aitbayev2018} and the references therein. Many efforts have been also devoted to the non-isothermal models, for instance, \cite{Feireisl2008} derived a priori estimates and the weak stability of the compressible Navier--Stokes--Fourier system; \cite{Bulicek2009} obtained the existence and long-time behavior of the incompressible Navier--Stokes--Fourier system; \cite{Eleuteri2015} proved the existence of a two-phase diffuse interface model of incompressible fluids; \cite{Anna2019} analyzed the global-in-time well-posedness of the non-isothermal liquid crystal system.

In this paper, we present some analytical results on a non-isothermal Poisson--Nernst--Planck--Fourier (PNPF) system, which is derived from the Energetic Variational Approach (EnVarA) \cite{Hyon2010a,SXu2014,LWL:CMS:2018}. Compared with most of the non-isothermal models in history, our model is thermodynamic consistent in the sense of satisfying basic thermodynamic laws as well as the Onsager reciprocal relations \cite{Onsager1931d,Onsager1931b}. The rest of the paper is organized as follows. In section 2, we present the model derivation and discussion. In section 3, we describe the result of global existence of the Poisson--Nernst--Planck--Fourier system. In section 4, we give the proof details of the well-posedness result by the energy method.

\section{Model Derivation}
\label{sec:1}

The Energetic Variational Approach (EnVarA) is a systematic procedure to derive the consistent constitutive relations and thus the dynamic equations. It has three steps: (1) Least Action Principle: with given form of the total energy as a functional of the flow-map, the conservative force minimizes the total action; (2) Maximum Dissipation Principle: with given form of the entropy production, the dissipative force maximizes the energy dissipation; (3) Onsager Principle: the conservative force should be balanced with the dissipative force.

The original EnVarA is for general classical mechanics \cite{Rayleigh,Onsager1931d,Onsager1931b}, and has shown to be successful in general complex fluids in the isothermal case. When the temperature is a variable, it turns out the EnVarA is consistent with the basic thermodynamic laws \cite{LWL:CMS:2018}. Here we derive all the constitutive relations through the EnVarA, demonstrating the variational structure of the Poisson--Nernst--Planck--Fourier system.

Compared with the EnVarA in the isothermal case, whose properties are determined by the fluid flow-map, we also need to take into account the energy flow-map in the non-isothermal case. In another word, there are two type of kinematic relations:

\begin{equation}
\text{Mass conservation:} \begin{cases}
p_t + \nabla \cdot (p v^p) = 0,\\[3mm]
n_t + \nabla \cdot (n v^n) = 0,
\end{cases}\label{ma_con}
\end{equation}
\begin{equation}
\text{Energy conservation:}
\begin{cases}
e^p_t + \nabla \cdot (e^p v^p) =  -\nabla \cdot (P^p v^p) - pv^p \cdot \nabla\phi - \nabla \cdot q^p + q^{pn},\\[3mm]
e^n_t + \nabla \cdot (e^n v^n)=  -\nabla \cdot (P^n v^n) +nv^n \cdot \nabla \phi - \nabla \cdot q^n + q^{np}.
\end{cases}\label{en_con}
\end{equation}
For simplicity, we only consider the electrolytes in whole space $\mathbb{R}^3$ with two ionic species of valences $z=\pm 1$. The positive ion density distribution is denoted as $p$ and the negative ion density is $n$. $v^p$ and $v^n$ are their velocities separately. The energy conservation comes from the First Law of Thermodynamics. $e^p=c^p p \theta$ and $e^n=c^n n \theta$ represents the internal energy density of the positive/negative charges, without the electrostatic potential energy. $\theta$ is the temperature. $c^p$ and $c^n$ are the specific heat capacitance of the ions. It should be noted that, the internal energy transports along with the material, so that $e^p$ transports with the velocity $v^p$ while $e^n$ transports with the velocity $v^n$. $P^p=p\theta$ and $P^n=n\theta$ are the thermo-pressure of ions, $P^p v^p$ and $P^n v^n$ describe the rate of work from pressure. $q^p$ and $q^n$ represents the heat fluxes within the corresponding ionic species, $q^{pn}=-q^{np}$ represents the heat exchange between the two species. $\phi$ is the electrical potential satisfying the free-space Poisson's equation,
\begin{equation*}
\Delta \phi = n-p, \ \ \ \lim_{|x|\to \infty} \phi =0.
\end{equation*}
Here we used a single variable $\theta$ as the temperature distribution for both the anions and cations. It is sufficient to consider the total energy conservation without distinguishing $q^p$, $q^n$ and $q^{np}$. Adding together the two energy conservation equations in \eqref{en_con}, we obtain,
\begin{eqnarray}
e_t  &=&  -\nabla \cdot (c^p p \theta v^p-c^n n \theta v^n)-\nabla \cdot (P^p v^p+P^n v^n)  \label{energy_con1}\\
&&\qquad\qquad -\nabla \cdot (p \phi v_p -n \phi v_n) - \frac{1}{2}\nabla \cdot (\phi_t \nabla \phi-\phi\nabla\phi_t) - \nabla \cdot q \notag\\
&\triangleq& -\nabla \cdot j^e.
\label{energy_con}
\end{eqnarray}
Here $\displaystyle e=(c^p p +c^n n)\theta + \frac{p-n}{2}\phi$ is the total energy density, including the electrostatic energy.  $q=q^p+q^n$ is the total heat flux. The first term on the right hand side of \eqref{energy_con1} is the transport of the internal energy $e^p$ and $e^n$. The second term represents the work from thermo-pressure. The third term can be viewed as the transport of the electrostatic energy. It should be noted that, the transport of the electrostatic energy is different from the transport of $e^p$ and $e^n$. This is due to the long-range Coulomb pairwise interaction, which also introduces the fourth term, understood as the exchange of electrostatic energy between particles at different locations.

To apply the EnVarA, besides the kinematic relations provided by the conservation laws, we also need the energy equality from the Second Law of Thermodynamics,
\begin{equation}
\frac{d}{dt} S = \triangle \geq 0,
\label{energy_law}
\end{equation}
where $S$ is the entropy and $\triangle$ is the rate of the entropy production. Under the PNP framework, the form of the entropy is treated at the mean-field level,
\begin{eqnarray}
S(t) &= &-\int_{\mathbb{R}^3} p(x,t) [\log p(x,t) - c^p \log \theta(x,t) ] \nonumber\\
& &\qquad\qquad + n(x,t) [\log n(x,t) - c^n \log \theta(x,t) ] dx \nonumber\\
&\triangleq &\int [\eta^p(x,t) + \eta^n(x,t)] dx = \int \eta(x,t) dx.\label{entropy}
\end{eqnarray}
Here $\eta$ is the total entropic density, $\eta^p \triangleq  -p (\log p - c^p \log \theta )$ and $\eta^n\triangleq  - n (\log n - c^n \log \theta )$ represent the entropic densities of individual species.

\paragraph{Least Action Principle} The particle flow-maps describe the mapping from the Lagrangian coordinate to the Eulerian coordinate of the two ionic species, which is usually given by, $x^p_t(X,t) = v^p(x^p,t)$ and $x^n_t(X,t) = v^n(x^n,t)$, with $X$ being the Lagrangian coordinates and $x^p$, $x^n$ being the Eulerian coordinates. However, this approach cannot describe the heat flux, since there is no ``heat (temperature) velocity'' analogous to the mechanical velocities. In fact, the internal energy transports along with the flow-map, while at the same time performing work and absorbing heat as described in \eqref{energy_con1}, which means the kinematics of temperature is much more complicated when coupled with material flow-maps.

It should be noted that, these flow-maps can also be characterized through the accumulated fluxes in Eulerian coordinates alone without pulling back to the Lagrangian coordinates:
\begin{equation*}
J^p_t(x,t) = j^p(x,t) \triangleq p v^p(x,t), \quad  J^n_t(x,t) = j^n(x,t) \triangleq nv^n(x,t).
\end{equation*}
Similarly, the energy flow-map is considered through
\begin{equation*}
J^e_t(x,t) = j^e(x,t).
\end{equation*}

The total action is defined based on the left hand side of the energy equality \eqref{energy_law},
\begin{equation*}
\mathcal{A}[J^p(x,t),J^n(x,t),J^e(x,t)] = -\int_0^\tau S(t) dt.
\end{equation*}
It is a functional of two particle flow-maps $J^p$ and $J^n$ as well as the energy flow-map $J^e$.
\begin{rmk}
The action is usually defined as $\mathcal{A} = \displaystyle\int_0^\tau F(t) dt$ in the isothermal cases, because the energy law is $\dfrac{d}{dt} F = -\triangle_F$, with $F$ being the Helmholtz free energy and $\triangle_F$ being the energy dissipation. In the non-isothermal case, the definition of action should be modified accordingly. 
\end{rmk}

The conservative forces can be computed through the variation of the action with respect to the flow-maps (see Appendix \ref{ap_lap} for detailed calculation), which is the Least Action Principle,
\begin{equation}
\begin{cases}
f^p_{con}(x,t) &=\dfrac{\delta \mathcal{A}}{\delta J^p(x,t)} \\[3mm]
&= -\nabla \left[\log p - c^p \log  \theta  +\dfrac{\phi}{2\theta}+ \int_{\mathbb{R}^3} \dfrac{p(y,t)-n(y,t)}{8\pi \theta(y) |x-y|}dy \right],\\[3mm]
f^n_{con}(x,t) &=\dfrac{\delta \mathcal{A}}{\delta J^n(x,t)} \\[3mm]
&= -\nabla \left[\log n - c^n \log  \theta  -\dfrac{\phi}{2\theta}- \int_{\mathbb{R}^3} \dfrac{p(y,t)-n(y,t)}{8\pi \theta(y) |x-y|}dy \right],\\[3mm]
f^e_{con}(x,t) &=\dfrac{\delta \mathcal{A}}{\delta J^e(x,t)} = \nabla \dfrac{1}{ \theta }.
\end{cases}\label{conservative force}
\end{equation}

\paragraph{Maximum Dissipation Principle}

The form of the entropy production rate is chosen to be in the quadratic form, the same as in the classical PNP model,
\begin{equation}
\triangle = \int_{\mathbb{R}^3} \left[\frac{|j^p|^2}{ D^p p \theta}+\frac{|j^n|^2}{ D^n n \theta}+\frac{1}{k}\left|\frac{q}{  \theta}\right|^2 \right]dx \triangleq \int_{\mathbb{R}^3} \widetilde{\triangle} dx.\label{entropy_production}
\end{equation}
Here $D^p$ and $D^n$ are the mobilities of the ions, related with their diffusion coefficients. $k$ is the heat conduction rate. $\displaystyle \widetilde{\triangle}$ is the density of entropy production rate. Then the dissipative forces can be computed from the variation of the entropy production rate with respect to the fluxes (see Appendix \ref{ap_mdp} for detailed calculation), which is the Maximum Dissipation Principle,
\begin{equation}
\begin{cases}
f^p_{dis}(x,t) &=\dfrac{1}{2}\dfrac{\delta \triangle}{\delta j^p(x,t)} \\[3mm]
&= \dfrac{j^p}{D^p p\theta} - \dfrac{(c_p+1) \theta +\phi }{k\theta^2}q \\[3mm]
&\qquad +\nabla \displaystyle\int_{\mathbb{R}^3} \dfrac{q(y,t)}{2k \theta^2(y,t)} \left(\dfrac{\phi(y,t) (y-x)}{4\pi|x-y|^{3}} - \dfrac{\nabla_y \phi(y,t)}{4\pi |x-y|} \right) dy,\\[3mm]
f^n_{dis}(x,t) &=\dfrac{1}{2}\dfrac{\delta \triangle}{\delta j^p(x,t)} \\[3mm]
&= \dfrac{j^n}{D^n n\theta} - \dfrac{(c_n+1) \theta -\phi  }{k\theta^2}q \\[3mm]
&\qquad -\nabla \displaystyle\int_{\mathbb{R}^3} \dfrac{q(y,t)}{2k \theta^2(y,t)} \left(\dfrac{\phi(y,t) (y-x)}{4\pi|x-y|^{3}} - \dfrac{\nabla_y \phi(y,t)}{4\pi |x-y|} \right) dy,\\[3mm]
f^e_{dis}(x,t) &=\dfrac{1}{2}\dfrac{\delta \triangle}{\delta j^e(x,t)} = \dfrac{q}{k\theta^2}.
\end{cases}\label{dissipative force}
\end{equation}
Here $1/2$ corresponds to the quadratic form of $\Delta$, meaning it is a linear response theory.

\paragraph{Onsager's Principle} Using the conservative and dissipative forces, \eqref{energy_law} can be rewritten into the form
\begin{equation}
\int_{\mathbb{R}^3} [(f^p_{cons}-f^p_{dis})\cdot j_p + (f^n_{cons}-f^n_{dis})\cdot j_p ]dx = \int_{\mathbb{R}^3} (f^e_{dis}-f^e_{cons})\cdot j_e dx. \label{balance}
\end{equation}
The left hand side is about the mechanics (deformation) of the fluid system, while the right hand side is about the energy (heat). It would be nature to conclude the balance between the conservative and dissipative forces, which is known as the Onsager's Principle.

The force balance between $f^e_{con}$ and $f^e_{dis}$ gives the Fourier's law,
\begin{equation*}
q = -k\nabla \theta.
\end{equation*}
The force balances on the ion flow-maps reveal the Darcy's law,
\begin{equation*}
\begin{cases}
pv^p=j^p = -D^p [\nabla (p\theta) +p \nabla \phi],\\[3mm]
nv^n=j^n = -D^n [\nabla (n\theta) -n \nabla \phi].
\end{cases}
\end{equation*}
\begin{rmk}
In this derivation, we ignored the contribution from the solute, which can be taken into account by introducing the solute heat capacitance and relative drag between solvent and solute molecules \cite{Hsieh2015,Wang2016,LWL:CMS:2018}. On the other hand, the same approach can be generalized to the system with multiple ion species.
\end{rmk}

\paragraph{The Poisson--Nernst--Planck--Fourier System}
Combining the above constitutive relations with the kinematic equations, we conclude with a Poisson--Nernst--Planck--Fourier system:
\begin{equation}
\begin{cases}
\displaystyle p_t + \nabla \cdot (p v^p) = 0,\quad p v^p = -D^p [\nabla (p\theta) +p \nabla \phi],\\[3mm]
\displaystyle n_t + \nabla \cdot (n v^n) = 0,\quad n v^n = -D^n [\nabla (n\theta) -n \nabla \phi],\\[3mm]
- \Delta \phi =p-n,\\[3mm]
\displaystyle (c^p p + c^n n)\theta_t + (c^p p v^p + c^n n v^n)\nabla \theta\\[3mm]
\displaystyle \qquad\qquad =  \nabla \cdot k\nabla \theta +\frac{p|v^p|^2}{D^p} +\frac{n|v^n|^2}{D^n} - p\theta \nabla \cdot v^p - n\theta \nabla \cdot v^n.
\end{cases}
\label{PNPF}
\end{equation}


It is straightforward to verify \eqref{PNPF} satisfies the conservation of mass: \eqref{ma_con}, and the First Law of Thermodynamics (conservation of energy): \eqref{energy_con} (see Appendix \ref{ap_ec} for detailed calculation). To verify the PNPF system satisfies the Second Law of Thermodynamics: \eqref{energy_law}, we investigate the local entropic density:
\begin{align*}
\frac{\partial }{\partial t}\eta &= -(\log p -c^p \log \theta +1 )p_t - (\log n -c^n \log \theta +1 )n_t +\frac{pc^p + nc^n}{\theta}\theta_t \\
&= (\log p -c^p \log \theta +1 )\nabla \cdot (p v^p) + (\log n -c^n \log \theta +1 )\nabla \cdot (n v^n) \\
&\qquad +\frac{pc^p + nc^n}{\theta}\theta_t \\
&= -\nabla \cdot [\eta^p v^p + \eta^n v^n] -\nabla \cdot \frac{q}{\theta}+ \widetilde{\triangle}.
\end{align*}
Here the first term is the transport of the entropy along with the particle velocities. The second term is the heat flux.  The last term is the entropy production rate. This is the differential form of \eqref{energy_law}, which is equivalent to the Clausius--Duhem inequality. 

\begin{rmk}
It should be noted that, the Clausius--Duhem inequality could take different forms. If we choose another form, e.g., $\eta_t + \nabla \cdot \frac{q}{\theta}= \widetilde{\triangle}\geq 0$, there will be another set of closed PDE system, which differs from the force balance given by Onsager's principle, but satisfies the same energy law in \eqref{energy_law}. This is because the solution to \eqref{balance} might not be unique.
\end{rmk}

To justify the validity of the PNPF model, we show that \eqref{PNPF} also satisfies the Onsager's reciprocal relation. Rewrite the particle and energy fluxes into the form of
\begin{equation*}
\begin{cases}
\displaystyle j^p =  -L_{pp} \nabla \frac{\mu_p}{\theta} - L_{pn} \nabla\frac{\mu_n}{\theta}+L_{p\theta} \nabla\frac{1}{\theta},\\[3mm]
\displaystyle j^n = - L_{np} \nabla \frac{\mu_p}{\theta} - L_{nn} \nabla\frac{\mu_n}{\theta}+L_{n\theta} \nabla\frac{1}{\theta},\\[3mm]
\displaystyle j^e = - L_{\theta p} \nabla \frac{\mu_p}{\theta} - L_{\theta n} \nabla\frac{\mu_n}{\theta}+L_{\theta\theta} \nabla\frac{1}{\theta}.
\end{cases}
\end{equation*}
Here $\mu_p = \theta(\log p -c^p\log\theta) +\phi$ and $\mu_n = \theta(\log n -c^n \log \theta)-\phi$ are the  chemical potentials of the two ion species. $L_{ij}$ are the coefficients describing the ratio between flows and ``forces''. Onsager's reciprocal relation requires these coefficients to be symmetric, namely, $L_{ij}=L_{ji}$. In the consititutive relations of \eqref{PNPF},
we can see,
\begin{equation*}
\begin{cases}
L_{pn}=L_{np}=0,\\[3mm]
L_{p\theta}=L_{\theta p} = D^p p \theta [(c^p+1)\theta + \phi],\\[3mm]
L_{n\theta}=L_{\theta n} = D^n n \theta [(c^n+1)\theta - \phi].
\end{cases}
\end{equation*}

\section{Well-Posedness Result}

In this section, we deal with the global well-posed-ness of the PNPF system (\ref{PNPF}) with the initial data sufficiently close to constant equilibrium states. For simplicity, we assume $c^p = c^n = c > 1$ and $D^p = D^n = \epsilon = k = 1$. Then the system (\ref{PNPF}) can be written as
\begin{align}
n_t &= \nabla \cdot \left( \nabla(n \theta) - n \nabla \phi \right), \label{original_eqn_1}\\
p_t &= \nabla \cdot \left( \nabla(p \theta) + p \nabla \phi \right), \label{original_eqn_2}\\
\Delta \phi &= n - p, \quad \lim_{|x| \rightarrow \infty} \phi = 0, \label{original_eqn_3}\\
c [(n+p) &\theta_t - (\nabla(n \theta) - n \nabla \phi + \nabla(p \theta) + p \nabla \phi) \cdot \nabla \theta] \notag\\
\quad &= n \theta \nabla \cdot \left( \dfrac{1}{n} ( \nabla(n \theta) - n \nabla \phi) \right) + p \theta \nabla \cdot \left( \dfrac{1}{p} ( \nabla(p \theta) + p \nabla \phi) \right) \label{original_eqn_4}\\
&\quad\quad + \Delta \theta + \dfrac{1}{n} \left| \nabla(n \theta) - n \nabla \phi \right|^2 + \dfrac{1}{p} \left| \nabla(p \theta) + p \nabla \phi \right|^2, \notag
\end{align}
for $x \in \mathbb{R}^3, t > 0$. By (\ref{original_eqn_3}), $\phi$ can be expressed in terms of $n$ and $p$ as
\begin{align}\label{original_eqn_3_1}
\phi(x,t) = -\frac{1}{4\pi} \int_{\mathbb{R}^3} \frac{n(y,t) - p(y,t)}{|x - y|} dy.
\end{align}
We look for solutions $(n,p,\phi,\theta)$ of system (\ref{original_eqn_1})--(\ref{original_eqn_4}) with initial data
\begin{align}\label{original_IC}
(n,p,\theta)(x,0) = (n_0,p_0,\theta_0)(x),
\end{align}
where $n_0$, $p_0$ and $\theta_0$ are close to constant states. As for the initial data for $\phi$, it can be determined by (\ref{original_eqn_3_1}) and (\ref{original_IC})
\begin{align*}
\phi(x,0) = \phi_0(x) := -\frac{1}{4\pi} \int_{\mathbb{R}^3} \frac{n_0(y) - p_0(y)}{|x - y|} dy.
\end{align*}
Without loss of generality, we assume that $n_0$, $p_0$ and $\theta_0$ are close to $1$, and state our result as follows.

\begin{thm}\label{main_theorem}
Suppose that
\begin{align*}
n_0 -1 , p_0 - 1 , \theta_0 - 1 \in H^2(\mathbb{R}^3), \quad \nabla \phi_0 \in L^2(\mathbb{R}^3).
\end{align*}
There is a positive constant $\delta_0$ such that if
\begin{align}\label{assumption_delta_0}
\| (n_0 -1 , p_0 - 1 , \theta_0 - 1) \|_{H^2} + \| \nabla \phi_0 \|_{L^2} \leq \delta_0,
\end{align}
then the system (\ref{original_eqn_1})--(\ref{original_eqn_4}) with (\ref{original_IC}) has a unique global solution.
\end{thm}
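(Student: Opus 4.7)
My plan is to combine a standard local existence result with a closed a priori energy estimate near the constant equilibrium $(n,p,\theta,\nabla\phi) \equiv (1,1,1,0)$, and then invoke a continuation argument. I would introduce the perturbations $N = n-1$, $P = p-1$, $\Theta = \theta - 1$ and rewrite \eqref{original_eqn_1}--\eqref{original_eqn_4} as a coupled parabolic system, where the principal parts are $N_t - \Delta N$, $P_t - \Delta P$, and (after dividing by $n+p \approx 2$, which is legitimate thanks to $H^2 \hookrightarrow L^\infty$ in $\mathbb{R}^3$) $\Theta_t - \tfrac{1}{2c}\Delta\Theta$, while all remaining terms are quadratic or cubic in $(N,P,\Theta,\nabla\phi)$ and their derivatives. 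Local existence on $[0,T_0]$ follows from a contraction mapping on $L^\infty_t H^2_x \cap L^2_t H^3_x$ by recovering $\phi$ from \eqref{original_eqn_3_1}, using the elliptic estimate $\|\nabla\phi\|_{H^{k+1}} \lesssim \|N-P\|_{H^k}$; the smallness assumption guarantees that $n,p,\theta$ stay uniformly bounded away from zero.

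For the global a priori estimate, I would define
\[
\mathcal{E}(t) = \|(N,P,\Theta)(t)\|_{H^2}^2 + \|\nabla\phi(t)\|_{L^2}^2, \qquad \mathcal{D}(t) = \|\nabla(N,P,\Theta)(t)\|_{H^2}^2,
\]
and perform $L^2$ energy estimates on $\partial^\alpha N$, $\partial^\alpha P$, $\partial^\alpha\Theta$ for $|\alpha|\le 2$. The linearized Laplacians give the principal dissipation $\mathcal{D}(t)$. The electrostatic coupling produces symmetric terms of the form $\int \partial^\alpha(N-P)\,\partial^\alpha\Delta\phi\,dx$; using $\Delta\phi = N-P$, these combine to give a nonnegative contribution controlling $\|\nabla\phi\|_{H^2}^2$ (after one integration by parts for the top-order term) plus the $L^2$ bound on $\nabla\phi$ obtained by pairing the Poisson equation with $\phi_t$. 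All nonlinear terms are estimated by Sobolev embeddings $H^2 \hookrightarrow L^\infty$, $H^1 \hookrightarrow L^6$, $L^2 \cdot L^\infty \subset L^2$, and the elliptic gain on $\nabla\phi$, producing a bound of the form
\[
\frac{d}{dt}\mathcal{E}(t) + c_0\,\mathcal{D}(t) \;\le\; C\sqrt{\mathcal{E}(t)}\,\mathcal{D}(t).
\]
Choosing $\delta_0$ so small that $C\sqrt{\mathcal{E}(0)} \le c_0/2$ and running a standard continuity argument yields $\mathcal{E}(t) \le \mathcal{E}(0)$ on the entire existence interval, which together with local existence extends the solution globally.

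The main obstacle I expect is the temperature equation \eqref{original_eqn_4}: it contains the quadratic source terms $\tfrac{1}{n}|\nabla(n\theta)-n\nabla\phi|^2$ and $\tfrac{1}{p}|\nabla(p\theta)+p\nabla\phi|^2$, the convective term $(\nabla(n\theta)-n\nabla\phi+\nabla(p\theta)+p\nabla\phi)\cdot\nabla\theta$, and the coefficient $(n+p)^{-1}$ in front of $\theta_t$. After applying $\partial^\alpha$ with $|\alpha|=2$ one encounters triple products such as $\partial^2\Theta\cdot\partial^2 N\cdot\partial\phi$ and $\partial^2\Theta\cdot\partial N\cdot\partial^2\phi$ whose $L^2$ bounds require careful Hölder splittings ($L^\infty \cdot L^2 \cdot L^\infty$ or $L^6 \cdot L^6 \cdot L^6$) combined with the one-derivative gain from the Poisson equation. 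A secondary technicality is that $\phi$ itself is not in $L^2(\mathbb{R}^3)$, so every estimate involving $\phi$ undifferentiated must first be rearranged via integration by parts to expose $\nabla\phi$, which is square-integrable under the hypothesis \eqref{assumption_delta_0}. Once these nonlinear bounds are organized into the form $C\sqrt{\mathcal{E}}\,\mathcal{D}$, the rest of the argument is essentially mechanical.
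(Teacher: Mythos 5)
Your overall strategy is the same as the paper's (local existence plus small-data $H^2$ energy estimates and a continuation argument; working with $(N,P)$ instead of the paper's $u=n+p$, $v=n-p$ is immaterial, since the same cancellations appear after summing the two density estimates). However, your description of the linearized structure contains a genuine error: after expanding about $(1,1,1)$, the ``remaining terms'' are \emph{not} all quadratic or cubic. Each density equation contains the linear cross-diffusion term $\Delta\Theta$ (from $\nabla\cdot\nabla(n\theta)$) besides the linear electrostatic term $\mp(N-P)$, and the temperature equation, after dividing by $c(n+p)$, has principal part $\tfrac{3}{2c}\Delta\Theta+\tfrac{1}{2c}(\Delta N+\Delta P)$, not $\tfrac{1}{2c}\Delta\Theta$. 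These second-order linear couplings cannot be treated as perturbations: the time-derivative weights and the diffusion terms produce a \emph{coupled} quadratic form in $\nabla N,\nabla P,\nabla\Theta$ whose positive definiteness must be checked, and this is precisely where the paper works (it multiplies the temperature equation by $2\tilde\theta$, which is why the energy carries the weight $2c\|\tilde\theta\|_{H^2}^2$, and absorbs the cross terms $3\langle\nabla\tilde u,\nabla\tilde\theta\rangle$ by Young's inequality with the sharp constants $9/11$ and $11/4$). Your proposal never sees this issue, so the claimed principal dissipation is not justified as stated.

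The second gap is the closing inequality $\tfrac{d}{dt}\mathcal{E}+c_0\mathcal{D}\le C\sqrt{\mathcal{E}}\,\mathcal{D}$ with $\mathcal{D}=\|\nabla(N,P,\Theta)\|_{H^2}^2$. Several nonlinearities carry \emph{undifferentiated} factors of the charge or the field, e.g.\ the terms that reduce (after the cancellation you invoke) to $\langle N+P,(N-P)^2\rangle$, and the Joule-heating contribution $\langle|\nabla\phi|^2,\Theta\rangle$ in the temperature estimate; these are bounded by $C\delta\|N-P\|_{L^2}^2$ and $C\delta\|\nabla\phi\|_{L^2}^2$. Since there is no Poincar\'e inequality on $\mathbb{R}^3$, such terms cannot be absorbed into $c_0\|\nabla(N,P,\Theta)\|_{H^2}^2$: the dissipation must also contain $\|N-P\|_{H^2}^2$ and $\|\nabla\phi\|_{L^2}^2$, exactly as in the paper's Proposition \ref{prop_H2_est}. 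You do point at the two mechanisms that generate them --- the coupling $\int\partial^\alpha(N-P)\,\partial^\alpha\Delta\phi\,dx=\|\partial^\alpha(N-P)\|_{L^2}^2$, and a separate estimate for $\|\nabla\phi\|_{L^2}$ (the paper's Lemma \ref{nabla_phi_est}, obtained by testing the charge equation against $-\phi$) --- but note that the coupling controls $\|N-P\|_{H^2}$, hence $\nabla^2\phi$ through $\Delta\phi=N-P$, and not ``$\|\nabla\phi\|_{H^2}$'' as you claim; moreover neither quantity appears in your final inequality, so the continuity argument does not close as written. Once you (i) verify positive definiteness of the coupled dissipation with an admissible weighting of the temperature energy and (ii) enlarge $\mathcal{D}$ to include $\|N-P\|_{H^2}^2+\|\nabla\phi\|_{L^2}^2$, your argument becomes the paper's proof.
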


\begin{rmk}
If $c=1$, the temperature equation can be further simplified. However, the specific heat capacitance $c=3/2$ for monoatomic ideal gas, $c=5/2$ for diatomic rigid molecule, and $c=3$ for multi-atomic rigid molecule. In general, this constant is always greater than 1.
\end{rmk}

In the following, we will use $C$ to denote the generic constant that may vary from line to line. We use the notation for the norms $\| \cdot \|_{L^p} = \| \cdot \|_{L^p(\mathbb{R}^3)}$ and $\| \cdot \|_{H^k} = \| \cdot \|_{H^k(\mathbb{R}^3)}$. And $\langle \cdot , \cdot \rangle$ denotes the inner product in $L^2(\mathbb{R}^3)$. $\partial_i = \partial_{x_i}$ and $\partial_{ij} = \partial_{x_i} \partial_{x_j}$ for $i, j \in \{ 1, 2, 3 \}$.

\section{Proof of Theorem \ref{main_theorem}}

Now we are going to prove Theorem \ref{main_theorem}. In the framework of the energy method, the global existence can be obtained by the local well-posedness and global in time a priori estimates. Notice that the system (\ref{PNPF}) is strictly parabolic provided that $n$, $p$ and $\theta$ are positive away from zero. Therefore, under the assumption (\ref{assumption_delta_0}), the existence and uniqueness of local strong solutions of the system (\ref{PNPF}) can be proved in a standard way using the Galerkin method and the fixed-point argument. Therefore the proof is omitted and referred to, for example, Chapter VII of \cite{LSU68}.

Before we prove the a priori estimates, we first reformulate the equation as follows. Denoting $u = n + p$ and $v = n - p$, (\ref{original_eqn_1})--(\ref{original_IC}) can be rewritten as
\begin{align}
u_t &= \theta \Delta u + 2 \nabla \theta \cdot \nabla u + u \Delta \theta - \nabla v \cdot \nabla \phi - v^2, \label{eqn_0_1}\\
v_t &= \theta \Delta v + 2 \nabla \theta \cdot \nabla v + v \Delta \theta - \nabla u \cdot \nabla \phi - uv, \label{eqn_0_2}\\
c\theta_t &= \left(\theta + \dfrac{1}{u} \right) \Delta \theta + \dfrac{\theta^2}{u} \Delta u + \left(c + 1\right) |\nabla \theta|^2 \label{eqn_0_3}\\
&\quad\quad + \left(c + 3\right) \dfrac{\theta}{u} \nabla u \cdot \nabla \theta - 2\ \dfrac{\theta}{u} \nabla v \cdot \nabla \phi - \left(c + 2\right) \dfrac{v}{u} \nabla \theta \cdot \nabla \phi - \dfrac{\theta v^2}{u} + |\nabla \phi|^2. \notag
\end{align}
Here, in (\ref{eqn_0_1})--(\ref{eqn_0_3}), $\phi$ satisfies (\ref{original_eqn_3_1}), which can be rewritten in terms of $v$ as
\begin{align*}
\phi(x,t) = -\dfrac{1}{4\pi} \int_{\mathbb{R}^3} \dfrac{v(y,t)}{|x-y|} dy.
\end{align*}
Letting $\tilde{u} = u - 2$ and $\tilde{\theta} = \theta - 1$, we further reformulate (\ref{eqn_0_1})--(\ref{eqn_0_3}) as
\begin{align}
\tilde{u}_t &= \Delta \tilde{u} + 2 \Delta \tilde{\theta} + \tilde{\theta} \Delta \tilde{u} + \tilde{u} \Delta \tilde{\theta} + 2 \nabla \tilde{\theta} \cdot \nabla \tilde{u} - \nabla v \cdot \nabla \phi - v^2, \label{eqn_1_1}\\
v_t &= \Delta v - 2v + \tilde{\theta} \Delta v + v \Delta \tilde{\theta} + 2 \nabla \tilde{\theta} \cdot \nabla v - \nabla \tilde{u} \cdot \nabla \phi - \tilde{u} v, \label{eqn_1_2} \\
c\tilde{\theta}_t &= \dfrac{3}{2} \Delta \tilde{\theta} + \dfrac{1}{2} \Delta \tilde{u} + \tilde{\theta} \Delta \tilde{\theta} - \dfrac{\tilde{u}}{2(2 + \tilde{u})} \Delta \tilde{\theta} + \dfrac{2 \tilde{\theta}^2 + 4 \tilde{\theta} - \tilde{u}}{2(2 + \tilde{u})} \Delta \tilde{u} \label{eqn_1_3}\\
&\quad\quad + \left(c + 1\right) |\nabla \tilde{\theta}|^2 + \left(c + 3\right) \dfrac{1 + \tilde{\theta}}{2 + \tilde{u}} \nabla \tilde{u} \cdot \nabla \tilde{\theta} \notag\\
&\quad\quad - 2\ \dfrac{1 + \tilde{\theta}}{2 + \tilde{u}} \nabla v \cdot \nabla \phi - \left(c + 2\right) \dfrac{v}{2 + \tilde{u}} \nabla \tilde{\theta} \cdot \nabla \phi - \dfrac{(1 + \tilde{\theta}) v^2}{2 + \tilde{u}} + |\nabla \phi|^2. \notag
\end{align}
If we a priorily assume that
\begin{align*}
\| (n - 1 , p - 1 , \theta - 1) \|_{H^2} + \| \nabla \phi \|_{L^2} \leq \delta_0,
\end{align*}
then equivalently we have
\begin{align}\label{H2_small}
\| (\tilde{u} , v, \tilde{\theta}) \|_{H^2} + \| \nabla \phi \|_{L^2} \leq \delta
\end{align}
for some $0 < \delta \sim \delta_0$. Moreover, by the  Sobolev embedding, (\ref{H2_small}) implies that
\begin{align*}
\| (\tilde{u} , v, \tilde{\theta}) \|_{L^\infty} \leq C \delta.
\end{align*}
In order to prove Theorem \ref{main_theorem}, it suffices to show the following a priori estimate.

\begin{prop}\label{prop_H2_est}
Assuming (\ref{H2_small}) for $\delta$ sufficiently small, we have
\begin{align*}
&\dfrac{1}{2} \dfrac{d}{dt} \left( \| (\tilde{u} , v) \|_{H^2}^2 + 2c \| \tilde{\theta} \|_{H^2}^2 + \| \nabla \phi \|_{L^2}^2 \right) \\
&\qquad + C_1 \| \nabla (\tilde{u} , v, \tilde{\theta}) \|_{H^2}^2 + C_2 \| v \|_{H^2}^2 + C_3 \| \nabla \phi \|_{L^2}^2 \leq 0.
\end{align*}
\end{prop}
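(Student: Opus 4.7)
The plan is to prove Proposition~\ref{prop_H2_est} by a standard energy method: derive time-differential inequalities at the $L^2$, $H^1$, and $H^2$ levels for the reformulated system (\ref{eqn_1_1})--(\ref{eqn_1_3}) and add them up with the weights appearing in the functional of the statement. The structurally critical point is that the principal part of the $(\tilde u,\tilde\theta)$ subsystem is cross-diffusive (the $2\Delta\tilde\theta$ in (\ref{eqn_1_1}) and the $\tfrac12\Delta\tilde u$ in (\ref{eqn_1_3})), so the weight $2c$ on $\|\tilde\theta\|_{H^2}^2$ has to be chosen to make the resulting quadratic form in $(\nabla\tilde u,\nabla\tilde\theta)$ coercive. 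Indeed, testing (\ref{eqn_1_1}) against $\tilde u$ and (\ref{eqn_1_3}) against $\mu\tilde\theta$, summing and isolating the contribution of the linear part yields
\[
\tfrac{1}{2}\tfrac{d}{dt}\bigl(\|\tilde u\|^2+c\mu\|\tilde\theta\|^2\bigr)+\|\nabla\tilde u\|^2+\tfrac{3\mu}{2}\|\nabla\tilde\theta\|^2+\bigl(2+\tfrac{\mu}{2}\bigr)\langle\nabla\tilde u,\nabla\tilde\theta\rangle=\text{(nonlinear)},
\]
and the quadratic form in $(\nabla\tilde u,\nabla\tilde\theta)$ is positive definite iff $\mu^2-16\mu+16<0$; the choice $\mu=2$, producing precisely the weight $2c$ in the statement, lies comfortably in $(8-4\sqrt3,8+4\sqrt3)$ and gives a lower bound $c_0\|\nabla(\tilde u,\tilde\theta)\|_{L^2}^2$.

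For the $v$-equation, testing (\ref{eqn_1_2}) against $v$ gives $-\|\nabla v\|^2-2\|v\|^2$ from the linear part, which furnishes both the diffusion in $\|v\|_{H^2}^2$ and the zeroth-order damping $C_2\|v\|_{H^2}^2$ of the statement. For the electric contribution $\|\nabla\phi\|_{L^2}^2$ I use $\Delta\phi=-v$ to get $\tfrac12\tfrac{d}{dt}\|\nabla\phi\|^2=\int\phi\,v_t\,dx$; substituting (\ref{eqn_1_2}), integrating by parts, and using $\int\phi v\,dx=\|\nabla\phi\|^2$, the linear part yields $-\langle\nabla\phi,\nabla v\rangle-2\|\nabla\phi\|^2$, and Young's inequality absorbs the cross term into the $\|\nabla v\|^2$ dissipation while retaining the genuine damping $C_3\|\nabla\phi\|^2$. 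The $H^1$ and $H^2$ estimates are obtained by applying $\partial_i$ and $\partial_{ij}$ to each equation and testing against the same derivative of the corresponding unknown with identical weights $(1,1,2c)$; since the principal symbol is unchanged, the coercivity analysis above is reproduced at each order. As only $\|\nabla\phi\|_{L^2}$ is tracked in the functional, the higher-order Poisson couplings are handled by the elliptic regularity $\|\nabla^{k+2}\phi\|_{L^2}\lesssim\|\nabla^k v\|_{L^2}$.

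The remaining task is to bound all nonlinearities by $C\delta\bigl(\|\nabla(\tilde u,v,\tilde\theta)\|_{H^2}^2+\|v\|_{H^2}^2+\|\nabla\phi\|_{L^2}^2\bigr)$ so that they can be absorbed into $C_1,C_2,C_3$ once $\delta$ is small. They fall into three classes: (i) products of an $L^\infty$-small factor---namely $\tilde u$, $v$, $\tilde\theta$, or a smooth function of $\tilde u$ such as $\tilde u/(2+\tilde u)$---with a derivative already present in the dissipation; (ii) Poisson-type couplings like $\nabla v\cdot\nabla\phi$ and $|\nabla\phi|^2$, handled by the above elliptic regularity; (iii) zeroth-order quadratic terms such as $v^2$, $\tilde uv$, $|\nabla\tilde\theta|^2$. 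All are controlled using the Sobolev embedding $H^2\hookrightarrow L^\infty$ in $\mathbb R^3$ together with the smallness (\ref{H2_small}). I expect the main obstacle to be the top-order terms at the $H^2$ level which, after the Leibniz rule, distribute derivatives across two factors so as to place (naively) three or more derivatives on a single field---for instance $\partial_{ij}\bigl(\tfrac{2\tilde\theta^2+4\tilde\theta-\tilde u}{2(2+\tilde u)}\Delta\tilde u\bigr)$ in (\ref{eqn_1_3}), or $\partial_{ij}(\tilde\theta\Delta\tilde\theta)$. These require integration by parts to redistribute one derivative and then a Gagliardo--Nirenberg estimate so that only one factor carries more than one derivative, after which $L^\infty$-smallness closes the estimate. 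Keeping the coupled quadratic form coercive with the same weights $(1,1,2c)$ at every differentiation level is the central bookkeeping.
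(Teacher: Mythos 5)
Your proposal follows essentially the same route as the paper's proof: weighted energy estimates at the $L^2$, $H^1$ and $H^2$ levels with the weights $(1,1,2c)$ chosen precisely so that the cross-diffusive principal part (the $2\Delta\tilde\theta$ and $\tfrac12\Delta\tilde u$ couplings) yields a coercive quadratic form in $(\nabla\tilde u,\nabla\tilde\theta)$ at every order, a separate estimate for $\|\nabla\phi\|_{L^2}$ obtained by pairing the charge equation (\ref{eqn_1_2}) with $\phi$, and absorption of all nonlinear and Poisson-coupled terms via $H^2\hookrightarrow L^\infty$, H\"older/Gagliardo--Nirenberg and elliptic-type bounds for $\phi$, which is exactly the content of Lemmas \ref{lem_L2_est}--\ref{nabla_phi_est}. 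The only slip is the sign convention $\Delta\phi=-v$ (here $v=n-p$ so $\Delta\phi=v$); it is harmless since it amounts to replacing $\phi$ by $-\phi$ and you only use $\|\nabla\phi\|_{L^2}$ and absolute-value bounds --- with the correct sign the cross term you absorb by Young is in fact exactly $-\|v\|_{L^2}^2$, which is how the paper obtains the extra damping in (\ref{id_1_31}).
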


\noindent With the aid of Proposition \ref{prop_H2_est}, we conclude that
\begin{align*}
\| (\tilde{u} , v) \|_{H^2}^2 + 2c \| \tilde{\theta} \|_{H^2}^2 + \| \nabla \phi \|_{L^2}^2 &\leq \| (\tilde{u}_0 , v_0) \|_{H^2}^2 + 2c \| \tilde{\theta}_0 \|_{H^2}^2 + \| \nabla \phi_0 \|_{L^2}^2 \\
&\leq C \delta^2
\end{align*}
for any $t \geq 0$, where $\tilde{u}_0 = n_0 + p_0 - 2$, $v_0 = n_0 - p_0$, and $\tilde{\theta}_0 = \theta - 1$. Theorem \ref{main_theorem} then follows as we noted in the beginning of this section.

In the remaining of this section, we will prove Proposition \ref{prop_H2_est}. The proof can be separated into several lemmas. First, we prove the $L^2$ estimate as follows.

\begin{lem}\label{lem_L2_est}
Assuming (\ref{H2_small}) for $\delta$ sufficiently small, we have
\begin{align*}
\dfrac{1}{2} \dfrac{d}{dt} \left( \| (\tilde{u} , v) \|_{L^2}^2 + 2c \| \tilde{\theta} \|_{L^2}^2 \right) + C \| \nabla (\tilde{u} , v, \tilde{\theta}) \|_{L^2}^2 + C \| v \|_{L^2}^2 \leq C \delta \| \nabla \phi \|_{L^2}^2.
\end{align*}
\end{lem}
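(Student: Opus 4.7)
The plan is to perform the basic $L^2$ energy estimate on \eqref{eqn_1_1}--\eqref{eqn_1_3} by testing the equations against suitable multipliers and tracking every term carefully. Specifically, I would multiply \eqref{eqn_1_1} by $\tilde{u}$, \eqref{eqn_1_2} by $v$, and \eqref{eqn_1_3} by $2\tilde{\theta}$, integrate over $\mathbb{R}^3$, and sum; the left-hand side immediately collapses to $\tfrac{1}{2}\tfrac{d}{dt}\bigl(\|\tilde{u}\|_{L^2}^2 + \|v\|_{L^2}^2 + 2c\|\tilde{\theta}\|_{L^2}^2\bigr)$. After integrating by parts each Laplacian, the \emph{linear} part of the right-hand side reduces to
\begin{equation*}
-\|\nabla\tilde{u}\|_{L^2}^2 - 3\langle\nabla\tilde{u},\nabla\tilde{\theta}\rangle - 3\|\nabla\tilde{\theta}\|_{L^2}^2 - \|\nabla v\|_{L^2}^2 - 2\|v\|_{L^2}^2,
\end{equation*}
where the cross term comes from the mixed Laplacians $2\Delta\tilde{\theta}$ in \eqref{eqn_1_1} and $\tfrac{1}{2}\Delta\tilde{u}$ in \eqref{eqn_1_3}. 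The quadratic form in $(\nabla\tilde{u},\nabla\tilde{\theta})$ corresponds to the symmetric matrix with entries $1,\,3/2,\,3/2,\,3$; its leading entry is $1>0$ and its determinant is $3-9/4 = 3/4 > 0$, so by Sylvester's criterion it is positive definite and dominates a uniform multiple of $\|\nabla\tilde{u}\|_{L^2}^2 + \|\nabla\tilde{\theta}\|_{L^2}^2$. Together with $\|\nabla v\|_{L^2}^2 + 2\|v\|_{L^2}^2$ this is precisely the coercive dissipation claimed by the lemma.

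For the nonlinear terms the decisive structural observation is that every nonlinearity in \eqref{eqn_1_1}--\eqref{eqn_1_3} already carries a factor drawn from $\{\tilde{u}, v, \tilde{\theta}, \nabla\phi\}$, so when paired with one of the test functions $\tilde{u}, v, 2\tilde{\theta}$ the integrand carries at least two small factors. By \eqref{H2_small} and the Sobolev embedding $H^2(\mathbb{R}^3)\hookrightarrow L^\infty(\mathbb{R}^3)$ one has $\|(\tilde{u},v,\tilde{\theta})\|_{L^\infty}\leq C\delta$, and for the nonlinearities without $\nabla\phi$ one integrates by parts any surviving second-derivative factor and applies H\"older in the $L^\infty\cdot L^2\cdot L^2$ pattern to obtain a bound of size $C\delta\bigl(\|\nabla\tilde{u}\|_{L^2}^2 + \|\nabla v\|_{L^2}^2 + \|\nabla\tilde{\theta}\|_{L^2}^2 + \|v\|_{L^2}^2\bigr)$. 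For $\delta$ sufficiently small, each such contribution is absorbed into the coercive dissipation on the left-hand side.

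The principal obstacle, and the sole source of the right-hand side $C\delta\|\nabla\phi\|_{L^2}^2$, is the treatment of the $\nabla\phi$-containing nonlinearities, namely $-\nabla v\cdot\nabla\phi$ in \eqref{eqn_1_1}, $-\nabla\tilde{u}\cdot\nabla\phi$ in \eqref{eqn_1_2}, and the three $\nabla\phi$-cross terms together with $|\nabla\phi|^2$ in \eqref{eqn_1_3}, since $\|\nabla\phi\|_{L^2}$ is not controlled by the $L^2$-level dissipation and therefore cannot be absorbed on the left. For each such term I plan the crude bound
\begin{equation*}
\Bigl|\int_{\mathbb{R}^3} \chi\, \nabla X\cdot\nabla\phi\, dx\Bigr| \leq \|\chi\|_{L^\infty}\,\|\nabla X\|_{L^2}\,\|\nabla\phi\|_{L^2} \leq C\delta\bigl(\|\nabla X\|_{L^2}^2 + \|\nabla\phi\|_{L^2}^2\bigr),
\end{equation*}
where the coefficient $\chi$, built from the nonlinearity times a test function, is always $O(\delta)$ in $L^\infty$ because at least one factor of $\tilde{u}, v,$ or $\tilde{\theta}$ appears in it; the pure $\int \tilde{\theta}|\nabla\phi|^2$ contribution is handled directly as $\leq C\delta\|\nabla\phi\|_{L^2}^2$. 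The $\|\nabla X\|_{L^2}^2$ pieces are absorbed into the dissipation, leaving the $C\delta\|\nabla\phi\|_{L^2}^2$ residue as the right-hand side of the lemma; it is to be closed later at the higher-order stage of the energy hierarchy by exploiting $\Delta\phi = v$ and the $v$-dissipation already present here.
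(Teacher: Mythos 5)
Your proposal is correct and follows essentially the same route as the paper: test \eqref{eqn_1_1}, \eqref{eqn_1_2}, \eqref{eqn_1_3} with $\tilde{u}$, $v$, $2\tilde{\theta}$, integrate by parts, keep the coercive linear dissipation (your positive-definiteness argument for the $(\nabla\tilde{u},\nabla\tilde{\theta})$ block is equivalent to the paper's Young-inequality absorption of the $3\langle\nabla\tilde{u},\nabla\tilde{\theta}\rangle$ cross term), and absorb all nonlinearities using $\|(\tilde{u},v,\tilde{\theta})\|_{L^\infty}\leq C\delta$, leaving only the $C\delta\|\nabla\phi\|_{L^2}^2$ residue. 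The only cosmetic difference is that the paper combines $\langle\nabla v\cdot\nabla\phi+v^2,\tilde{u}\rangle+\langle\nabla\tilde{u}\cdot\nabla\phi+\tilde{u}v,v\rangle=3\langle\tilde{u},v^2\rangle$ via $\Delta\phi=v$, whereas your cruder H\"older bound on these terms also suffices since the stated estimate tolerates a $C\delta\|\nabla\phi\|_{L^2}^2$ right-hand side.
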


\begin{proof}
Multiplying the equations (\ref{eqn_1_1}), (\ref{eqn_1_2}) and (\ref{eqn_1_3}) by $\tilde{u}$, $v$ and $2 \tilde{\theta}$, respectively, integrating over $\mathbb{R}^3$, doing integration by parts, and then summing the resulting equalities, we obtain
\begin{align}\label{id_1_01}
&\quad\ \dfrac{1}{2} \dfrac{d}{dt} \left( \| (\tilde{u} , v) \|_{L^2}^2 + 2c \| \tilde{\theta} \|_{L^2}^2 \right) + \| \nabla (\tilde{u},v) \|_{L^2}^2 + 3 \| \nabla \tilde{\theta} \|_{L^2}^2 + 2 \| v \|_{L^2}^2 \notag\\
&= 2 \langle \Delta \tilde{\theta} , \tilde{u} \rangle + \langle \Delta \tilde{u} , \tilde{\theta} \rangle + \langle \tilde{\theta} \Delta \tilde{u} + \tilde{u} \Delta \tilde{\theta} , \tilde{u} \rangle + \langle \tilde{\theta} \Delta v + v \Delta \tilde{\theta} , v \rangle \\
&\quad\quad + \left\langle 2 \tilde{\theta} \Delta \tilde{\theta} - \dfrac{\tilde{u}}{2 + \tilde{u}} \Delta \tilde{\theta} + \dfrac{2 \tilde{\theta}^2 + 4 \tilde{\theta} - \tilde{u}}{2 + \tilde{u}} \Delta \tilde{u} , \tilde{\theta} \right\rangle \notag\\
&\quad\quad + \left\langle 2 \nabla \tilde{\theta} \cdot \nabla \tilde{u} - \nabla v \cdot \nabla \phi - v^2 , \tilde{u} \right\rangle + \left\langle 2 \nabla \tilde{\theta} \cdot \nabla v - \nabla \tilde{u} \cdot \nabla \phi - \tilde{u} v , v \right\rangle \notag\\
&\quad\quad + 2 \left\langle \left(c + 1\right) |\nabla \tilde{\theta}|^2 + \left(c + 3\right) \dfrac{1 + \tilde{\theta}}{2 + \tilde{u}} \nabla \tilde{u} \cdot \nabla \tilde{\theta} , \tilde{\theta} \right\rangle \notag\\
&\quad\quad - 2 \left\langle 2\ \dfrac{1 + \tilde{\theta}}{2 + \tilde{u}} \nabla v \cdot \nabla \phi + \left(c + 2\right) \dfrac{v}{2 + \tilde{u}} \nabla \tilde{\theta} \cdot \nabla \phi + \dfrac{(1 + \tilde{\theta}) v^2}{2 + \tilde{u}} - |\nabla \phi|^2 , \tilde{\theta} \right\rangle. \notag
\end{align}
We can estimate the right-hand side of the last identity term by term as follows. First, by doing integration by parts and Young's inequality, it holds
\begin{align}\label{id_1_02}
\left| 2 \langle \Delta \tilde{\theta} , \tilde{u} \rangle + \langle \Delta \tilde{u} , \tilde{\theta} \rangle \right| &= 3 \left| \langle \nabla \tilde{u} , \nabla \tilde{\theta} \rangle \right| \notag\\
&\leq \dfrac{9}{11} \| \nabla \tilde{u} \|_{L^2}^2 +  \dfrac{11}{4} \| \nabla \tilde{\theta} \|_{L^2}^2.
\end{align}
Again doing integration by parts, and using H\"{o}lder's inequality and Young's inequality, we have
\begin{align}\label{id_1_03}
| \langle \tilde{\theta} \Delta \tilde{u} + \tilde{u} \Delta \tilde{\theta} , \tilde{u} \rangle | &\leq 3 | \langle \nabla \tilde{\theta} \cdot \nabla \tilde{u} , \tilde{u} \rangle | + | \langle | \nabla \tilde{u} |^2 , \tilde{\theta} \rangle | \notag\\
&\leq 3 \| \nabla \tilde{\theta} \|_{L^2} \| \nabla \tilde{u} \|_{L^2} \| \tilde{u} \|_{L^\infty} + \| \nabla \tilde{u} \|_{L^2}^2 \| \tilde{\theta} \|_{L^\infty} \notag\\
&\leq C \delta \| \nabla (\tilde{u} , \tilde{\theta}) \|_{L^2}^2.
\end{align}
Similarly, we have
\begin{align}\label{id_1_04}
| \langle \tilde{\theta} \Delta v + v \Delta \tilde{\theta} , v \rangle | \leq C \delta \| \nabla (v , \tilde{\theta}) \|_{L^2}^2,
\end{align}
and
\begin{align}\label{id_1_05}
\left| \left\langle 2 \tilde{\theta} \Delta \tilde{\theta} - \dfrac{\tilde{u}}{2 + \tilde{u}} \Delta \tilde{\theta} + \dfrac{2 \tilde{\theta}^2 + 4 \tilde{\theta} - \tilde{u}}{2 + \tilde{u}} \Delta \tilde{u} , \tilde{\theta} \right\rangle \right| \leq C \delta \| \nabla ( u , \tilde{\theta}) \|_{L^2}^2.
\end{align}
Next, integration by parts yields
\begin{align*}
\left\langle \nabla v \cdot \nabla \phi + v^2 , \tilde{u} \right\rangle + \left\langle \nabla \tilde{u} \cdot \nabla \phi + \tilde{u} v , v \right\rangle &= \left\langle \tilde{u} v , \Delta \phi \right\rangle + 2 \left\langle \tilde{u} , v^2 \right\rangle \\
&= 3 \left\langle \tilde{u} , v^2 \right\rangle.
\end{align*}
Thus,
\begin{align}\label{id_1_06}
&\quad\ \left| \left\langle 2 \nabla \tilde{\theta} \cdot \nabla \tilde{u} - \nabla v \cdot \nabla \phi - v^2 , \tilde{u} \right\rangle + \left\langle 2 \nabla \tilde{\theta} \cdot \nabla v - \nabla \tilde{u} \cdot \nabla \phi - \tilde{u} v , v \right\rangle \right| \notag\\
&\leq 2 \left| \left\langle \nabla \tilde{\theta} \cdot \nabla \tilde{u} , \tilde{u} \right\rangle \right| + 2 \left| \left\langle \nabla \tilde{\theta} \cdot \nabla v , v \right\rangle \right| + 3 \left| \left\langle \tilde{u} , v^2 \right\rangle \right| \notag\\
&\leq 2 \| \nabla \tilde{\theta} \|_{L^2} \| \nabla \tilde{u} \|_{L^2} \| \tilde{u} \|_{L^\infty} + 2 \| \nabla \tilde{\theta} \|_{L^2} \| \nabla v \|_{L^2} \| v \|_{L^\infty} + 3 \| u \|_{L^\infty} \| v \|_{L^2}^2 \notag\\
&\leq C \delta \| \nabla ( \tilde{u} , v , \tilde{\theta} ) \|_{L^2}^2 + C \delta \| v \|_{L^2}^2.
\end{align}
For $\delta$ sufficiently small such that $\| \tilde{u} \|_{L^\infty} < 1$, we obtain
\begin{align}\label{id_1_07}
&\quad\ \left| \left\langle \left(c + 1\right) |\nabla \tilde{\theta}|^2 + \left(c + 3\right) \dfrac{1 + \tilde{\theta}}{2 + \tilde{u}} \nabla \tilde{u} \cdot \nabla \tilde{\theta} , \tilde{\theta} \right\rangle \right| \notag\\
&\leq C \| \nabla \tilde{\theta} \|_{L^2}^2 \| \nabla \tilde{\theta} \|_{L^\infty} + C \| \nabla \tilde{u} \|_{L^2} \| \nabla \tilde{\theta} \|_{L^2}  \| \tilde{\theta} \|_{L^\infty}^2 \notag\\
&\leq C \delta \| \nabla ( \tilde{u} , \tilde{\theta} ) \|_{L^2}^2.
\end{align}
Finally, for the last term, we have
\begin{align}\label{id_1_08}
&\quad\ \left| \left\langle 2\ \dfrac{1 + \tilde{\theta}}{2 + \tilde{u}} \nabla v \cdot \nabla \phi + \left(c + 2\right) \dfrac{v}{2 + \tilde{u}} \nabla \tilde{\theta} \cdot \nabla \phi + \dfrac{(1 + \tilde{\theta}) v^2}{2 + \tilde{u}} - |\nabla \phi|^2 , \tilde{\theta} \right\rangle \right| \notag\\
&\leq C \| \nabla v \|_{L^2} \| \nabla \phi \|_{L^2} \| \tilde{\theta} \|_{L^\infty} + C \| v \|_{L^\infty} \| \nabla \tilde{\theta} \|_{L^2} \| \nabla \phi \|_{L^2} \| \tilde{\theta} \|_{L^\infty} \notag\\
&\qquad\qquad + C \| v \|_{L^2}^2 \| \tilde{\theta} \|_{L^\infty} + C \| \nabla \phi \|_{L^2}^2 \| \tilde{\theta} \|_{L^\infty} \notag\\
&\leq C \delta \| \nabla ( v , \tilde{\theta} ) \|_{L^2}^2 + C \delta \| v \|_{L^2}^2 + C \delta \| \nabla \phi \|_{L^2}^2. 
\end{align}
Combining (\ref{id_1_01})--(\ref{id_1_08}), the proof is completed since $\delta$ is sufficiently small.
\end{proof}

Next, we prove the higher order estimate.

\begin{lem}\label{lem_H1_est}
Assuming (\ref{H2_small}) for $\delta$ sufficiently small, we have
\begin{align*}
&\quad\ \dfrac{1}{2} \dfrac{d}{dt} \left( \| \nabla (\tilde{u} , v) \|_{H^1}^2 + 2c \| \nabla \tilde{\theta} \|_{H^1}^2 \right) + C \| \nabla^2 (\tilde{u} , v, \tilde{\theta}) \|_{H^1}^2 + C \| \nabla v \|_{L^2}^2 \\
&\leq C\delta \left( \| \nabla (\tilde{u} , \tilde{\theta}) \|_{L^2}^2 + \| v \|_{L^2}^2 + \| \nabla \phi \|_{L^2}^2 \right).
\end{align*}
\end{lem}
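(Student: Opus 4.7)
The plan is to replicate the energy computation of Lemma~\ref{lem_L2_est} at the level of first and second spatial derivatives. Concretely, for each multi-index $\alpha$ with $1 \leq |\alpha| \leq 2$, apply $\partial^\alpha$ to (\ref{eqn_1_1})--(\ref{eqn_1_3}), test the resulting equations with $\partial^\alpha \tilde u$, $\partial^\alpha v$ and $2 \partial^\alpha \tilde\theta$ respectively, integrate by parts, and sum. The principal linear part produces the time derivative of $\|\nabla(\tilde u,v)\|_{H^1}^2 + 2c\|\nabla\tilde\theta\|_{H^1}^2$ on the left together with the diagonal dissipation $\|\nabla^2 \tilde u\|_{H^1}^2 + \|\nabla^2 v\|_{H^1}^2 + 3\|\nabla^2 \tilde\theta\|_{H^1}^2 + 2\|\nabla v\|_{H^1}^2$, plus an off-diagonal cross term $-3 \sum_{1\leq |\alpha| \leq 2} \langle \nabla\partial^\alpha \tilde u, \nabla\partial^\alpha \tilde\theta\rangle$ coming from the coupling coefficients $2\Delta\tilde\theta$ in (\ref{eqn_1_1}) and $\tfrac{1}{2}\Delta\tilde u$ in (\ref{eqn_1_3}); the same Young inequality used in (\ref{id_1_02}) absorbs this cross term into the diagonal dissipation with strictly positive room in every slot.

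For the nonlinear contributions I would expand $\partial^\alpha$ with the Leibniz rule and estimate each trilinear (or higher) piece by placing one factor in $L^\infty$ and the remaining factors in $L^2$, or by an $L^3 \times L^6 \times L^2$ split in borderline cases, invoking the Sobolev embeddings $H^2 \hookrightarrow L^\infty$ and $H^1 \hookrightarrow L^6$ together with (\ref{H2_small}) to pull out a factor of $\delta$. When $\partial^\alpha$ would force a $\nabla^3$ onto the top-order factor (for instance inside $\partial^\alpha(\tilde\theta \Delta \tilde u)$ with $|\alpha|=2$), one extra integration by parts moves a derivative onto the $\delta$-small coefficient $\tilde\theta$ instead, so that only $\nabla^2$ quantities appear against the test function. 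The quadratic source terms $-v^2$ in (\ref{eqn_1_1}) and $-\tilde u v$ in (\ref{eqn_1_2}) fall under the same Sobolev/Young scheme and contribute only to the $\|v\|_{L^2}^2$ and $\|\nabla(\tilde u,\tilde\theta)\|_{L^2}^2$-type remainders on the right-hand side.

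The electrostatic terms are controlled through the Calder\'on--Zygmund estimate $\|\nabla^{k+2} \phi\|_{L^2} \leq C \|\nabla^k v\|_{L^2}$ coming from $\Delta \phi = v$, which converts every high-order derivative of $\phi$ into a derivative of $v$ already present in the dissipation. Only the bottom-level quantity $\|\nabla\phi\|_{L^2}$ cannot be upgraded this way, which is precisely why the statement of the lemma tolerates the remainder $C\delta \|\nabla\phi\|_{L^2}^2$ on the right-hand side.

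The main obstacle is organizational rather than conceptual: each of the three equations carries roughly half a dozen nonlinear terms, and differentiating once or twice multiplies the bookkeeping accordingly. For every resulting piece one must check either that both extra derivatives fall on a quantity whose dissipative norm appears on the left, or that one further integration by parts removes the need for a $\nabla^3$ bound on a non-dissipative quantity; no single term is subtle in isolation, but the complete accounting is what occupies the bulk of the argument. Once it is carried out, collecting all estimates and choosing $\delta$ small enough to absorb the $C\delta \|\nabla^2(\tilde u,v,\tilde\theta)\|_{H^1}^2$ error into the diagonal dissipation closes the inequality.
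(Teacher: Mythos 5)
Your proposal follows essentially the same route as the paper: differentiate (\ref{eqn_1_1})--(\ref{eqn_1_3}) once and twice, test with $\partial^\alpha\tilde u$, $\partial^\alpha v$, $2\partial^\alpha\tilde\theta$, absorb the $-3\langle\nabla\partial^\alpha\tilde u,\nabla\partial^\alpha\tilde\theta\rangle$ cross term by the same Young inequality as in (\ref{id_1_02}), handle the nonlinear terms by Leibniz, H\"older/Sobolev splits and an extra integration by parts when a top-order derivative would otherwise be unpaired, and control the electrostatic terms by elliptic regularity for $\Delta\phi=v$ (your Calder\'on--Zygmund $L^2$ bound is interchangeable with the paper's inequality (\ref{HLS_ineq})), with $\|\nabla\phi\|_{L^2}$ left as the untouched remainder. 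This matches the paper's proof of Lemma \ref{lem_H1_est}, so no further comment is needed.
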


\begin{proof}
By applying $\partial_i$ to the equations (\ref{eqn_1_1}), (\ref{eqn_1_2}) and (\ref{eqn_1_3}) for $i = 1, 2, 3$, multiplying the resulting equations by $\partial_i \tilde{u}$, $\partial_i v$ and $2 \partial_i \tilde{\theta}$, respectively, integrating over $\mathbb{R}^3$, doing integration by parts, and then summing the resulting equalities, we obtain
\begin{align}\label{id_1_09}
&\quad\ \dfrac{1}{2} \dfrac{d}{dt} \left( \| (\partial_i \tilde{u} , \partial_i v) \|_{L^2}^2 + 2c \| \partial_i \tilde{\theta} \|_{L^2}^2 \right) \notag\\
&\qquad\qquad + \| \nabla (\partial_i \tilde{u} , \partial_i v) \|_{L^2}^2 + 3 \| \nabla ( \partial_i \tilde{\theta} ) \|_{L^2}^2 + 2 \| \partial_i v \|_{L^2}^2 \notag\\
&= 2 \langle \partial_i ( \Delta \tilde{\theta} ) , \partial_i \tilde{u} \rangle + \langle \partial_i ( \Delta \tilde{u} ) , \partial_i \tilde{\theta} \rangle \\
&\qquad\qquad + \langle \partial_i ( \tilde{\theta} \Delta \tilde{u} + \tilde{u} \Delta \tilde{\theta} ) , \partial_i \tilde{u} \rangle + \langle \partial_i ( \tilde{\theta} \Delta v + v \Delta \tilde{\theta} ) , \partial_i v \rangle \notag\\
&\qquad\qquad + \left\langle \partial_i \left( 2 \tilde{\theta} \Delta \tilde{\theta} - \dfrac{\tilde{u}}{2 + \tilde{u}} \Delta \tilde{\theta} + \dfrac{2 \tilde{\theta}^2 + 4 \tilde{\theta} - \tilde{u}}{2 + \tilde{u}} \Delta \tilde{u} \right) , \partial_i \tilde{\theta} \right\rangle \notag\\
&\qquad\qquad + \left\langle \partial_i \left( 2 \nabla \tilde{\theta} \cdot \nabla \tilde{u} - \nabla v \cdot \nabla \phi - v^2 \right) , \partial_i \tilde{u} \right\rangle \notag\\
&\qquad\qquad + \left\langle \partial_i \left( 2 \nabla \tilde{\theta} \cdot \nabla v - \nabla \tilde{u} \cdot \nabla \phi - \tilde{u} v \right) , \partial_i v \right\rangle \notag\\
&\qquad\qquad + 2 \left\langle \partial_i \left( \left(c + 1\right) |\nabla \tilde{\theta}|^2 + \left(c + 3\right) \dfrac{1 + \tilde{\theta}}{2 + \tilde{u}} \nabla \tilde{u} \cdot \nabla \tilde{\theta} \right) , \partial_i \tilde{\theta} \right\rangle \notag\\
&\qquad\qquad - 2 \Bigg\langle \partial_i \Bigg( 2\ \dfrac{1 + \tilde{\theta}}{2 + \tilde{u}} \nabla v \cdot \nabla \phi \notag\\
&\qquad\qquad\qquad\qquad\qquad + \left(c + 2\right) \dfrac{v}{2 + \tilde{u}} \nabla \tilde{\theta} \cdot \nabla \phi + \dfrac{(1 + \tilde{\theta}) v^2}{2 + \tilde{u}} - |\nabla \phi|^2 \Bigg) , \partial_i \tilde{\theta} \Bigg\rangle. \notag
\end{align}
We now estimate the right-hand side of (\ref{id_1_09}) as follows. First, doing integration by parts and using Young’s inequality yield
\begin{align}\label{id_1_10}
\left| 2 \langle \partial_i ( \Delta \tilde{\theta} ) , \partial_i \tilde{u} \rangle + \langle \partial_i ( \Delta \tilde{u} ) , \partial_i \tilde{\theta} \rangle \right| &= 3 \left| \langle \nabla ( \partial_i \tilde{u} ) , \nabla ( \partial_i \tilde{\theta} ) \rangle \right| \notag \\
&\leq \frac{9}{11} \| \nabla ( \partial_i \tilde{u} ) \|_{L^2}^2 + \frac{11}{4} \| \nabla ( \partial_i \tilde{\theta} ) \|_{L^2}^2. 
\end{align}
Next, by doing integration by parts, and using H\"{o}lder's inequality, Sobolev inequality and Young's inequality, we have
\begin{align}\label{id_1_11}
&\quad\ \left| \langle \partial_i ( \tilde{\theta} \Delta \tilde{u} + \tilde{u} \Delta \tilde{\theta} ) , \partial_i \tilde{u} \rangle \right| \notag\\
&\leq \left| \langle \partial_i \tilde{\theta} \Delta \tilde{u} , \partial_i \tilde{u} \rangle \right| + \left| \langle \tilde{\theta} \Delta ( \partial_i \tilde{u} ) , \partial_i \tilde{u} \rangle \right| + \left| \langle \partial_i \tilde{u} \Delta \tilde{\theta} , \partial_i \tilde{u} \rangle \right| + \left| \langle \tilde{u} \Delta ( \partial_i \tilde{\theta} ) , \partial_i \tilde{u} \rangle \right| \notag\\
&\leq \left| \langle \partial_i \tilde{\theta} \Delta \tilde{u} , \partial_i \tilde{u} \rangle \right| + \left| \langle | \nabla (\partial_i \tilde{u}) |^2 , \tilde{\theta} \rangle \right| + \left| \langle \nabla \tilde{\theta} \cdot \nabla ( \partial_i \tilde{u} ) , \partial_i \tilde{u} \rangle \right| \notag\\
&\qquad\qquad + \left| \langle \partial_i \tilde{u} \Delta \tilde{\theta} , \partial_i \tilde{u} \rangle \right| + \left| \langle \nabla \tilde{u} \cdot \nabla ( \partial_i \tilde{\theta} ) , \partial_i \tilde{u} \rangle \right| + \left| \langle \nabla ( \partial_i \tilde{u} ) \cdot \nabla ( \partial_i \tilde{\theta} ) , \tilde{u} \rangle \right| \notag\\
&\leq 2 \| \nabla^2 \tilde{u} \|_{L^2} \| \nabla \tilde{\theta} \|_{L^4} \| \nabla \tilde{u} \|_{L^4} + \| \tilde{\theta} \|_{L^\infty} \| \nabla^2 \tilde{u} \|_{L^2}^2 \notag\\
&\qquad\qquad + 2 \| \nabla^2 \tilde{\theta} \|_{L^2} \| \nabla \tilde{u} \|_{L^4}^2 + \| \tilde{u} \|_{L^\infty} \| \nabla^2 \tilde{u} \|_{L^2} \| \nabla^2 \tilde{\theta} \|_{L^2} \notag\\
&\leq C \| \tilde{u} \|_{H^2} \| \nabla \tilde{\theta} \|_{H^1} \| \nabla \tilde{u} \|_{H^1} + \| \tilde{\theta} \|_{L^\infty} \| \nabla^2 \tilde{u} \|_{L^2}^2 \notag\\
&\qquad\qquad + C \| \tilde{\theta} \|_{H^2} \| \nabla \tilde{u} \|_{H^1}^2 + \| \tilde{u} \|_{L^\infty} \| \nabla^2 \tilde{u} \|_{L^2} \| \nabla^2 \tilde{\theta} \|_{L^2} \notag\\
&\leq C\delta \| \nabla (\tilde{u} , \tilde{\theta}) \|_{H^1}^2.
\end{align}
Similarly,
\begin{align}\label{id_1_12}
\left| \langle \partial_i ( \tilde{\theta} \Delta v + v \Delta \tilde{\theta} ) , \partial_i v \rangle \right| \leq C\delta \| \nabla (v , \tilde{\theta}) \|_{H^1}^2,
\end{align}
and, for $\delta$ sufficiently small such that  $\| \tilde{u} \|_{L^\infty} < 1$,
\begin{align}\label{id_1_13}
\left| \left\langle \partial_i \left( 2 \tilde{\theta} \Delta \tilde{\theta} - \dfrac{\tilde{u}}{2 + \tilde{u}} \Delta \tilde{\theta} + \dfrac{2 \tilde{\theta}^2 + 4 \tilde{\theta} - \tilde{u}}{2 + \tilde{u}} \Delta \tilde{u} \right) , \partial_i \tilde{\theta} \right\rangle \right| \leq C\delta \| \nabla (\tilde{u} , \tilde{\theta}) \|_{H^1}^2.
\end{align}
Notice that the Hardy-Littlewood-Sobolev inequality gives
\begin{align}\label{HLS_ineq}
\| \nabla^k \phi \|_{L^6} \leq C \| \nabla^{k-1} v \|_{L^2}
\end{align}
for each $k \in \mathbb{N}$. Together with H\"{o}lder's inequality, Sobolev inequality and Young's inequality, it is easy to deduce that
\begin{align}\label{id_1_14}
&\quad\ \left| \left\langle \partial_i \left( 2 \nabla \tilde{\theta} \cdot \nabla \tilde{u} - \nabla v \cdot \nabla \phi - v^2 \right) , \partial_i \tilde{u} \right\rangle \right| \notag\\
&\leq 2 \left| \left\langle \nabla ( \partial_i \tilde{\theta} ) \cdot \nabla \tilde{u} , \partial_i \tilde{u} \right\rangle \right| + 2 \left| \left\langle \nabla \tilde{\theta} \cdot \nabla ( \partial_i \tilde{u} ) , \partial_i \tilde{u} \right\rangle \right| \notag\\
&\qquad\qquad + \left| \left\langle \nabla (\partial_i v ) \cdot \nabla \phi , \partial_i \tilde{u} \right\rangle \right| + \left| \left\langle \nabla v \cdot \nabla (\partial_i \phi ), \partial_i \tilde{u} \right\rangle \right| + 2 \left| \left\langle v \partial_i v , \partial_i \tilde{u} \right\rangle \right| \notag\\
&\leq 2 \| \nabla^2 \tilde{\theta} \|_{L^2} \| \nabla \tilde{u} \|_{L^4}^2 + 2 \| \nabla^2 \tilde{u} \|_{L^2} \| \nabla \tilde{\theta} \|_{L^4} \| \nabla \tilde{u} \|_{L^4} \notag\\
&\qquad\qquad + \| \nabla^2 v \|_{L^2} \| \nabla \tilde{u} \|_{L^3} \| \nabla \phi \|_{L^6} + \| \nabla \tilde{u} \|_{L^2} \| \nabla v \|_{L^3} \| \nabla^2 \phi \|_{L^6} \notag\\
&\qquad\qquad + 2 \| v \|_{L^\infty} \| \nabla v \|_{L^2} \| \nabla \tilde{u} \|_{L^2} \notag\\
&\leq C \| \tilde{\theta} \|_{H^2} \| \nabla \tilde{u} \|_{H^1}^2 + C \| \tilde{u} \|_{H^2} \| \nabla \tilde{\theta} \|_{H^1} \| \nabla \tilde{u} \|_{H^1} \notag\\
&\qquad\qquad + C \| v \|_{H^2} \| \nabla \tilde{u} \|_{H^1} \| v \|_{L^2} + C \| \tilde{u} \|_{H^2} \| \nabla v \|_{H^1} \| \nabla v \|_{L^2} \notag\\
&\qquad\qquad + 2 \| v \|_{L^\infty} \| \nabla v \|_{L^2} \| \nabla \tilde{u} \|_{L^2} \notag\\
&\leq C\delta \left( \| \nabla (\tilde{u} , v , \tilde{\theta}) \|_{H^1}^2 + \| v \|_{L^2}^2 \right).
\end{align}
Similarly, we have
\begin{align}\label{id_1_15}
\left| \left\langle \partial_i \left( 2 \nabla \tilde{\theta} \cdot \nabla v - \nabla \tilde{u} \cdot \nabla \phi - \tilde{u} v \right) , \partial_i v \right\rangle \right| \leq C\delta \left( \| \nabla (\tilde{u} , v , \tilde{\theta}) \|_{H^1}^2 + \| v \|_{L^2}^2 \right),
\end{align}
\begin{align}\label{id_1_16}
\left| \left\langle \partial_i \left( \left(c + 1\right) |\nabla \tilde{\theta}|^2 + \left(c + 3\right) \dfrac{1 + \tilde{\theta}}{2 + \tilde{u}} \nabla \tilde{u} \cdot \nabla \tilde{\theta} \right) , \partial_i \tilde{\theta} \right\rangle \right| \leq C\delta \| \nabla (\tilde{u} , \tilde{\theta}) \|_{H^1}^2,
\end{align}
and
\begin{align}\label{id_1_17}
&\quad\ \left| \left\langle \partial_i \left( 2\ \dfrac{1 + \tilde{\theta}}{2 + \tilde{u}} \nabla v \cdot \nabla \phi + \left(c + 2\right) \dfrac{v}{2 + \tilde{u}} \nabla \tilde{\theta} \cdot \nabla \phi + \dfrac{(1 + \tilde{\theta}) v^2}{2 + \tilde{u}} - |\nabla \phi|^2 \right) , \partial_i \tilde{\theta} \right\rangle \right| \notag\\
&\leq C\delta \left( \| \nabla (\tilde{u} , v , \tilde{\theta}) \|_{H^1}^2 + \| v \|_{L^2}^2 + \| \nabla \phi \|_{L^2}^2 \right).
\end{align}
Combining (\ref{id_1_09})--(\ref{id_1_13}) and (\ref{id_1_14})--(\ref{id_1_17}), and summing over $i = 1, 2, 3$, we conclude
\begin{align}\label{id_1_18}
&\quad\ \dfrac{1}{2} \dfrac{d}{dt} \left( \| \nabla (\tilde{u} , v) \|_{L^2}^2 + 2c \| \nabla \tilde{\theta} \|_{L^2}^2 \right) + C \| \nabla^2 (\tilde{u} , v, \tilde{\theta}) \|_{L^2}^2 + C \| \nabla v \|_{L^2}^2 \notag\\
&\leq C\delta \left( \| \nabla (\tilde{u} , v , \tilde{\theta}) \|_{L^2}^2 + \| v \|_{L^2}^2 + \| \nabla \phi \|_{L^2}^2 \right)
\end{align}
for $\delta$ sufficiently small.

Next, applying $\partial_{ij}$ to the equations (\ref{eqn_1_1}), (\ref{eqn_1_2}) and (\ref{eqn_1_3}) for $i, j = 1, 2, 3$, multiplying the resulting equations by $\partial_{ij} \tilde{u}$, $\partial_{ij} v$ and $2 \partial_{ij} \tilde{\theta}$, respectively, integrating over $\mathbb{R}^3$, doing integration by parts, and then summing the resulting equalities, we obtain
\begin{align}\label{id_1_19}
&\quad\ \dfrac{1}{2} \dfrac{d}{dt} \left( \| (\partial_{ij} \tilde{u} , \partial_{ij} v) \|_{L^2}^2 + 2c \| \partial_{ij} \tilde{\theta} \|_{L^2}^2 \right) \notag\\
&\qquad\qquad + \| \nabla (\partial_{ij} \tilde{u} , \partial_{ij} v) \|_{L^2}^2 + 3 \| \nabla ( \partial_{ij} \tilde{\theta} ) \|_{L^2}^2 + 2 \| \partial_{ij} v \|_{L^2}^2 \notag\\
&= 2 \langle \partial_{ij} ( \Delta \tilde{\theta} ) , \partial_{ij} \tilde{u} \rangle + \langle \partial_{ij} ( \Delta \tilde{u} ) , \partial_{ij} \tilde{\theta} \rangle \\
&\qquad\qquad + \langle \partial_{ij} ( \tilde{\theta} \Delta \tilde{u} + \tilde{u} \Delta \tilde{\theta} ) , \partial_{ij} \tilde{u} \rangle + \langle \partial_i ( \tilde{\theta} \Delta v + v \Delta \tilde{\theta} ) , \partial_{ij} v \rangle \notag\\
&\qquad\qquad + \left\langle \partial_{ij} \left( 2 \tilde{\theta} \Delta \tilde{\theta} - \dfrac{\tilde{u}}{2 + \tilde{u}} \Delta \tilde{\theta} + \dfrac{2 \tilde{\theta}^2 + 4 \tilde{\theta} - \tilde{u}}{2 + \tilde{u}} \Delta \tilde{u} \right) , \partial_{ij} \tilde{\theta} \right\rangle \notag\\
&\qquad\qquad + \left\langle \partial_{ij} \left( 2 \nabla \tilde{\theta} \cdot \nabla \tilde{u} - \nabla v \cdot \nabla \phi - v^2 \right) , \partial_{ij} \tilde{u} \right\rangle \notag\\
&\qquad\qquad + \left\langle \partial_{ij} \left( 2 \nabla \tilde{\theta} \cdot \nabla v - \nabla \tilde{u} \cdot \nabla \phi - \tilde{u} v \right) , \partial_{ij} v \right\rangle \notag\\
&\qquad\qquad + 2 \left\langle \partial_{ij} \left( \left(c + 1\right) |\nabla \tilde{\theta}|^2 + \left(c + 3\right) \dfrac{1 + \tilde{\theta}}{2 + \tilde{u}} \nabla \tilde{u} \cdot \nabla \tilde{\theta} \right) , \partial_{ij} \tilde{\theta} \right\rangle \notag\\
&\qquad\qquad - 2 \Bigg\langle \partial_{ij} \Bigg( 2\ \dfrac{1 + \tilde{\theta}}{2 + \tilde{u}} \nabla v \cdot \nabla \phi \notag\\
&\qquad\qquad\qquad\qquad\qquad + \left(c + 2\right) \dfrac{v}{2 + \tilde{u}} \nabla \tilde{\theta} \cdot \nabla \phi + \dfrac{(1 + \tilde{\theta}) v^2}{2 + \tilde{u}} - |\nabla \phi|^2 \Bigg) , \partial_{ij} \tilde{\theta} \Bigg\rangle. \notag
\end{align}
We estimate the right-hand side of (\ref{id_1_19}) term by term in a way similar to the estimate for (\ref{id_1_09}). By integration by parts and Young's inequality,
\begin{align}\label{id_1_20}
\left| 2 \langle \partial_{ij} ( \Delta \tilde{\theta} ) , \partial_{ij} \tilde{u} \rangle + \langle \partial_{ij} ( \Delta \tilde{u} ) , \partial_{ij} \tilde{\theta} \rangle \right| &= 3 \left| \langle \nabla ( \partial_{ij} \tilde{u} ) , \nabla ( \partial_{ij} \tilde{\theta} ) \rangle \right| \notag \\
&\leq \frac{9}{11} \| \nabla ( \partial_{ij} \tilde{u} ) \|_{L^2}^2 + \frac{11}{4} \| \nabla ( \partial_{ij} \tilde{\theta} ) \|_{L^2}^2. 
\end{align}
Then, by integration by parts, H\"{o}lder's inequality, Sobolev inequality and Young's inequality, we have
\begin{align}\label{id_1_21}
&\quad\ \left| \langle \partial_{ij} ( \tilde{\theta} \Delta \tilde{u} + \tilde{u} \Delta \tilde{\theta} ) , \partial_{ij} \tilde{u} \rangle \right| \notag\\
&\leq \left| \langle \partial_{ij} \tilde{\theta} \Delta \tilde{u} , \partial_{ij} \tilde{u} \rangle \right| + \left| \langle \partial_j \tilde{\theta} \Delta (\partial_i \tilde{u}) , \partial_{ij} \tilde{u} \rangle \right| + \left| \langle \partial_i \tilde{\theta} \Delta (\partial_j \tilde{u}) , \partial_{ij} \tilde{u} \rangle \right| \notag\\
&\qquad\qquad + \left| \langle \tilde{\theta} \Delta (\partial_{ij} \tilde{u}) , \partial_{ij} \tilde{u} \rangle \right| + \left| \langle \partial_{ij} \tilde{u} \Delta \tilde{\theta} , \partial_{ij} \tilde{u} \rangle \right| + \left| \langle \partial_j \tilde{u} \Delta (\partial_i \tilde{\theta}) , \partial_{ij} \tilde{u} \rangle \right| \notag\\
&\qquad\qquad + \left| \langle \partial_i \tilde{u} \Delta (\partial_j \tilde{\theta}) , \partial_{ij} \tilde{u} \rangle \right| + \left| \langle \tilde{u} \Delta (\partial_{ij} \tilde{\theta}) , \partial_{ij} \tilde{u} \rangle \right| \notag\\
&\leq \left| \langle \partial_{ij} \tilde{\theta} \Delta \tilde{u} , \partial_{ij} \tilde{u} \rangle \right| + \left| \langle \partial_j \tilde{\theta} \Delta (\partial_i \tilde{u}) , \partial_{ij} \tilde{u} \rangle \right| + \left| \langle \partial_i \tilde{\theta} \Delta (\partial_j \tilde{u}) , \partial_{ij} \tilde{u} \rangle \right| \notag\\
&\qquad\qquad + \left| \langle \nabla \tilde{\theta} \cdot \nabla (\partial_{ij} \tilde{u}) , \partial_{ij} \tilde{u} \rangle \right| + \left| \langle | \nabla (\partial_{ij} \tilde{u}) |^2 , \tilde{\theta} \rangle \right| \notag\\
&\qquad\qquad + \left| \langle \partial_{ij} \tilde{u} \Delta \tilde{\theta} , \partial_{ij} \tilde{u} \rangle \right| + \left| \langle \partial_j \tilde{u} \Delta (\partial_i \tilde{\theta}) , \partial_{ij} \tilde{u} \rangle \right| + \left| \langle \partial_i \tilde{u} \Delta (\partial_j \tilde{\theta}) , \partial_{ij} \tilde{u} \rangle \right| \notag\\
&\qquad\qquad + \left| \langle \nabla \tilde{u} \cdot \nabla (\partial_{ij} \tilde{\theta}) , \partial_{ij} \tilde{u} \rangle \right| + \left| \langle \nabla (\partial_{ij} \tilde{\theta}) \cdot \nabla (\partial_{ij} \tilde{u}) , \tilde{u} \rangle \right| \notag\\
&\leq 2 \| \nabla^2 \tilde{\theta} \|_{L^2} \| \nabla^2 \tilde{u} \|_{L^4}^2 + 3 \| \nabla \tilde{\theta} \|_{L^4} \| \nabla^2 \tilde{u} \|_{L^4} \| \nabla^3 \tilde{u} \|_{L^2} \notag\\
&\qquad\qquad + 3 \| \nabla \tilde{u} \|_{L^4} \| \nabla^2 \tilde{u} \|_{L^4} \| \nabla^3 \tilde{\theta} \|_{L^2}  \notag\\
&\qquad\qquad + \| \tilde{\theta} \|_{L^\infty} \| \nabla^3 \tilde{u} \|_{L^2}^2 + \| \tilde{u} \|_{L^\infty} \| \nabla^3 \tilde{\theta} \|_{L^2} \| \nabla^3 \tilde{u} \|_{L^2} \notag\\
&\leq C \| \tilde{\theta} \|_{H^2} \| \nabla^2 \tilde{u} \|_{H^1}^2 + C \| \tilde{\theta} \|_{H^2} \| \nabla^2 \tilde{u} \|_{H^1} \| \nabla^3 \tilde{u} \|_{L^2} \notag\\
&\qquad\qquad + C \| \tilde{u} \|_{H^2} \| \nabla^2 \tilde{u} \|_{H^1} \| \nabla^3 \tilde{\theta} \|_{L^2}  \notag\\
&\quad\quad + \| \tilde{\theta} \|_{L^\infty} \| \nabla^3 \tilde{u} \|_{L^2}^2 + \| \tilde{u} \|_{L^\infty} \| \nabla^3 \tilde{\theta} \|_{L^2} \| \nabla^3 \tilde{u} \|_{L^2} \notag\\
&\leq C\delta \| \nabla^2 (\tilde{u} , \tilde{\theta}) \|_{H^1}.
\end{align}
Similarly, it holds
\begin{align}\label{id_1_22}
\left| \langle \partial_i ( \tilde{\theta} \Delta v + v \Delta \tilde{\theta} ) , \partial_{ij} v \rangle \right| \leq C\delta \| \nabla^2 (v , \tilde{\theta}) \|_{H^1},
\end{align}
and
\begin{align}\label{id_1_23}
\left| \left\langle \partial_{ij} \left( 2 \tilde{\theta} \Delta \tilde{\theta} - \dfrac{\tilde{u}}{2 + \tilde{u}} \Delta \tilde{\theta} + \dfrac{2 \tilde{\theta}^2 + 4 \tilde{\theta} - \tilde{u}}{2 + \tilde{u}} \Delta \tilde{u} \right) , \partial_{ij} \tilde{\theta} \right\rangle \right| \leq C\delta \| \nabla^2 (\tilde{u} , \tilde{\theta}) \|_{H^1}.
\end{align}
For those terms involving $\phi$, we will make use of H\"{o}lder's inequality, Sobolev inequality, Young's inequality and the Hardy-Littlewood-Sobolev inequality (\ref{HLS_ineq}). It follows that
\begin{align}\label{id_1_24}
&\quad\ \left| \left\langle \partial_{ij} \left( 2 \nabla \tilde{\theta} \cdot \nabla \tilde{u} - \nabla v \cdot \nabla \phi - v^2 \right) , \partial_{ij} \tilde{u} \right\rangle \right| \notag\\
&\leq 2 \left| \left\langle \nabla (\partial_{ij} \tilde{\theta}) \cdot \nabla \tilde{u} , \partial_{ij} \tilde{u} \right\rangle \right| + 2 \left| \left\langle \nabla (\partial_i \tilde{\theta}) \cdot \nabla (\partial_j \tilde{u}) , \partial_{ij} \tilde{u} \right\rangle \right| \notag\\
&\qquad\qquad + 2 \left| \left\langle \nabla (\partial_j \tilde{\theta}) \cdot \nabla (\partial_i \tilde{u}) , \partial_{ij} \tilde{u} \right\rangle \right| + 2 \left| \left\langle \nabla \tilde{\theta} \cdot \nabla (\partial_{ij} \tilde{u}) , \partial_{ij} \tilde{u} \right\rangle \right| \notag\\
&\qquad\qquad + \left| \left\langle \nabla (\partial_{ij} v) \cdot \nabla \phi , \partial_{ij} \tilde{u} \right\rangle \right| + \left| \left\langle \nabla (\partial_i v) \cdot \nabla (\partial_j \phi) , \partial_{ij} \tilde{u} \right\rangle \right| \notag\\
&\qquad\qquad + \left| \left\langle \nabla (\partial_j v) \cdot \nabla (\partial_i \phi) , \partial_{ij} \tilde{u} \right\rangle \right| + \left| \left\langle \nabla v \cdot \nabla (\partial_{ij} \phi) , \partial_{ij} \tilde{u} \right\rangle \right| \notag\\
&\qquad\qquad + 2 \left| \left\langle v \partial_{ij} v , \partial_{ij} \tilde{u} \right\rangle \right| + 2 \left| \left\langle \partial_{i} v \partial_j v , \partial_{ij} \tilde{u} \right\rangle \right| \notag\\
&\leq 2 \| \nabla \tilde{u} \|_{L^4} \| \nabla^2 \tilde{u} \|_{L^4} \| \nabla^3 \tilde{\theta} \|_{L^2} + 4 \| \nabla^2 \tilde{\theta} \|_{L^2} \| \nabla^2 \tilde{u} \|_{L^4}^2 \notag\\
&\qquad\qquad + 2 \| \nabla \tilde{\theta} \|_{L^4} \| \nabla^2 \tilde{u} \|_{L^4} \| \nabla^3 \tilde{u} \|_{L^2} + \| \nabla \phi \|_{L^6} \| \nabla^2 \tilde{u} \|_{L^3} \| \nabla^3 v \|_{L^2} \notag\\
&\qquad\qquad + 2 \| \nabla^2 v \|_{L^2} \| \nabla^2 \phi \|_{L^6} \| \nabla^2 \tilde{u} \|_{L^3} + \| \nabla v \|_{L^3} \| \nabla^3 \phi \|_{L^6} \| \nabla^2 \tilde{u} \|_{L^2} \notag\\
&\qquad\qquad + 2 \| v \|_{L^\infty} \| \nabla^2 v \|_{L^2} \| \nabla^2 \tilde{u} \|_{L^2} + 2 \| \nabla v \|_{L^4}^2 \| \nabla^2 \tilde{u} \|_{L^2} \notag\\
&\leq C \| \tilde{u} \|_{H^2} \| \nabla^2 \tilde{u} \|_{H^1} \| \nabla^3 \tilde{\theta} \|_{L^2} + C \| \tilde{\theta} \|_{H^2} \| \nabla^2 \tilde{u} \|_{H^1}^2 \notag\\
&\qquad\qquad + C \| \tilde{\theta} \|_{H^2} \| \nabla^2 \tilde{u} \|_{H^1} \| \nabla^3 \tilde{u} \|_{L^2} + C \| v \|_{L^2} \| \nabla^2 \tilde{u} \|_{H^1} \| \nabla^3 v \|_{L^2} \notag\\
&\qquad\qquad + C \| \nabla^2 v \|_{L^2} \| \nabla v \|_{L^2} \| \nabla^2 \tilde{u} \|_{H^1} + C \| \nabla v \|_{H^1} \| \nabla^2 v \|_{L^2} \| \nabla^2 \tilde{u} \|_{L^2} \notag\\
&\qquad\qquad + 2 \| v \|_{L^\infty} \| \nabla^2 v \|_{L^2} \| \nabla^2 \tilde{u} \|_{L^2} + C \| \nabla v \|_{H^1}^2 \| \nabla^2 \tilde{u} \|_{L^2} \notag\\
&\leq C\delta \left( \| \nabla^2 \tilde{u} \|_{H^1}^2 + \| \nabla v \|_{H^2}^2 + \| \nabla^3 \tilde{\theta} \|_{L^2}^2 \right).
\end{align}
Similarly, we have
\begin{align}\label{id_1_25}
&\quad\ \left| \left\langle \partial_{ij} \left( 2 \nabla \tilde{\theta} \cdot \nabla v - \nabla \tilde{u} \cdot \nabla \phi - \tilde{u} v \right) , \partial_{ij} v \right\rangle \right| \notag\\
&\leq C\delta \left( \| \nabla^2 \tilde{u} \|_{H^1}^2 + \| \nabla v \|_{H^2}^2 + \| \nabla^3 \tilde{\theta} \|_{L^2}^2 \right),
\end{align}
\begin{align}\label{id_1_26}
&\quad\ \left| \left\langle \partial_{ij} \left( \left(c + 1\right) |\nabla \tilde{\theta}|^2 + \left(c + 3\right) \dfrac{1 + \tilde{\theta}}{2 + \tilde{u}} \nabla \tilde{u} \cdot \nabla \tilde{\theta} \right) , \partial_{ij} \tilde{\theta} \right\rangle \right| \notag\\
&\leq C\delta \left( \| \nabla \tilde{u} \|_{H^2}^2 + \| \nabla^2 \tilde{\theta} \|_{H^1}^2 \right),
\end{align}
and
\begin{align}\label{id_1_27}
&\quad\ \left| \left\langle \partial_{ij} \left( 2\ \dfrac{1 + \tilde{\theta}}{2 + \tilde{u}} \nabla v \cdot \nabla \phi + \left(c + 2\right) \dfrac{v}{2 + \tilde{u}} \nabla \tilde{\theta} \cdot \nabla \phi + \dfrac{(1 + \tilde{\theta}) v^2}{2 + \tilde{u}} \right) , \partial_{ij} \tilde{\theta} \right\rangle \right| \notag\\
&\leq C\delta \left( \| \nabla \tilde{u} \|_{H^2}^2 + \| v \|_{H^2}^2 + \| \nabla^2 \tilde{\theta} \|_{H^1}^2 \right).
\end{align}
For the remaining term, we further do integration by parts once and use an interpolation inequality to deduce that
\begin{align}\label{id_1_28}
&\quad\ \left| \left\langle \partial_{ij} \left( |\nabla \phi|^2 \right) , \partial_{ij} \tilde{\theta} \right\rangle \right| \notag\\
&\leq 2 \left| \left\langle \nabla \phi \cdot \nabla ( \partial_{ij} \phi ) , \partial_{ij} \tilde{\theta} \right\rangle \right| + 2 \left| \left\langle \nabla ( \partial_i \phi ) \cdot \nabla ( \partial_j \phi ) , \partial_{ij} \tilde{\theta} \right\rangle \right| \notag\\
&\leq 4 \left| \left\langle \nabla \phi \cdot \nabla ( \partial_{ij} \phi ) , \partial_{ij} \tilde{\theta} \right\rangle \right| + 2 \left| \left\langle \nabla \phi \cdot \nabla ( \partial_j \phi ) , \partial_i \partial_{ij} \tilde{\theta} \right\rangle \right| \notag\\
&\leq 4 \| \nabla \phi \|_{L^2} \| \nabla^3 \phi \|_{L^6} \| \nabla^2 \tilde{\theta} \|_{L^3} + 2 \| \nabla \phi \|_{L^3} \| \nabla^2 \phi \|_{L^6} \| \nabla^3 \tilde{\theta} \|_{L^2} \notag\\
&\leq 4 \| \nabla \phi \|_{L^2} \| \nabla^3 \phi \|_{L^6} \| \nabla^2 \tilde{\theta} \|_{L^3} + 2 \| \nabla \phi \|_{L^2}^{1/2} \| \nabla \phi \|_{L^6}^{1/2} \| \nabla^2 \phi \|_{L^6} \| \nabla^3 \tilde{\theta} \|_{L^2} \notag\\
&\leq C \| \nabla \phi \|_{L^2} \| \nabla^2 v \|_{L^2} \| \nabla^2 \tilde{\theta} \|_{H^1} + C \| \nabla \phi \|_{L^2}^{1/2} \| v \|_{L^2}^{1/2} \| \nabla v \|_{L^2} \| \nabla^3 \tilde{\theta} \|_{L^2} \notag\\
&\leq C \left( \| v \|_{L^2}^2 + \| \nabla^2 \tilde{\theta} \|_{H^1}^2 + \| \nabla \phi \|_{L^2}^2 \right).
\end{align}
Combining (\ref{id_1_19})--(\ref{id_1_28}) and summing over all $i, j = 1, 2, 3$, we obtain
\begin{align}\label{id_1_29}
&\quad\ \dfrac{1}{2} \dfrac{d}{dt} \left( \| \nabla^2 (\tilde{u} , v) \|_{L^2}^2 + 2c \| \nabla^2 \tilde{\theta} \|_{L^2}^2 \right) + C \| \nabla^3 (\tilde{u} , v, \tilde{\theta}) \|_{L^2}^2 + C \| \nabla^2 v \|_{L^2}^2 \notag\\
&\leq C\delta \left( \| \nabla^2 (\tilde{u} , \tilde{\theta}) \|_{L^2}^2 + \| \nabla (\tilde{u} , v) \|_{L^2}^2 + \| v \|_{L^2}^2 + \| \nabla \phi \|_{L^2}^2 \right).
\end{align}
Therefore, Lemma \ref{lem_H1_est} follows (\ref{id_1_18}) and (\ref{id_1_29}).
\end{proof}

Finally, we give the estimate on $\| \nabla \phi \|_{L^2}$.
\begin{lem}\label{nabla_phi_est}
Assuming (\ref{H2_small}) for $\delta$ sufficiently small, we have
\begin{align*}
\dfrac{1}{2} \dfrac{d}{dt} \| \nabla \phi \|_{L^2}^2 + \| v \|_{L^2}^2 + C \| \nabla \phi \|_{L^2}^2 \leq C \delta \| \nabla (v , \tilde{\theta}) \|_{L^2}^2.
\end{align*}
\end{lem}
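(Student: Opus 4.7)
The plan is to test the $v$-equation \eqref{eqn_1_2} against the potential $\phi$. Differentiating the Poisson relation $\Delta\phi = v$ (which is \eqref{original_eqn_3} rewritten in terms of $v = n-p$) in time gives $\Delta\phi_t = v_t$, so that
\begin{equation*}
\tfrac{1}{2}\tfrac{d}{dt}\|\nabla\phi\|_{L^2}^2 = \langle\nabla\phi,\nabla\phi_t\rangle = -\langle\phi,\Delta\phi_t\rangle = -\langle\phi,v_t\rangle.
\end{equation*}
Substituting $v_t$ from \eqref{eqn_1_2} and using $\Delta\phi = v$ again, the linear part $\Delta v - 2v$ contributes $-\langle\phi,\Delta v\rangle = -\langle\Delta\phi,v\rangle = -\|v\|_{L^2}^2$ and $2\langle\phi,v\rangle = 2\langle\phi,\Delta\phi\rangle = -2\|\nabla\phi\|_{L^2}^2$, producing exactly the coercive terms $\|v\|_{L^2}^2$ and $C\|\nabla\phi\|_{L^2}^2$ (with $C=2$) on the left-hand side of the lemma. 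It remains to bound $-\langle\phi, N\rangle$ by $C\delta\|\nabla(v,\tilde\theta)\|_{L^2}^2$ up to terms absorbable into the coercive left, where $N := \tilde\theta\Delta v + v\Delta\tilde\theta + 2\nabla\tilde\theta\cdot\nabla v - \nabla\tilde u\cdot\nabla\phi - \tilde u v$.

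The key observation, which I expect to be the main obstacle, is that the two $\tilde u$-dependent contributions to $-\langle\phi, N\rangle$ must be combined before estimating; a direct H\"older bound on $\int \phi\,\nabla\tilde u\cdot\nabla\phi$ alone would introduce $\|\nabla\tilde u\|_{L^2}^2$ on the right, which is not listed in the target inequality. Instead, using $v = \Delta\phi$ and integration by parts,
\begin{equation*}
\int\phi\tilde u v\,dx = \int\phi\tilde u\Delta\phi\,dx = -\int\nabla(\phi\tilde u)\cdot\nabla\phi\,dx = -\int\tilde u|\nabla\phi|^2\,dx - \int\phi\,\nabla\tilde u\cdot\nabla\phi\,dx,
\end{equation*}
so that the two cross-terms collapse into
\begin{equation*}
\int\phi\,\nabla\tilde u\cdot\nabla\phi\,dx + \int\phi\tilde u v\,dx = -\int\tilde u|\nabla\phi|^2\,dx,
\end{equation*}
which is bounded by $\|\tilde u\|_{L^\infty}\|\nabla\phi\|_{L^2}^2 \leq C\delta\|\nabla\phi\|_{L^2}^2$ and absorbed into the $2\|\nabla\phi\|_{L^2}^2$ on the left.

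For the remaining three terms of $N$, the main auxiliary tool is the uniform bound $\|\phi\|_{L^\infty}\leq C\delta$, which I will derive from the embedding $W^{1,6}(\mathbb{R}^3)\hookrightarrow L^\infty$ together with $\|\phi\|_{L^6}\leq C\|\nabla\phi\|_{L^2}\leq C\delta$ (Sobolev) and $\|\nabla\phi\|_{L^6}\leq C\|v\|_{L^2}\leq C\delta$ (from \eqref{HLS_ineq}). The Laplacian terms $\int\phi\tilde\theta\Delta v$ and $\int\phi v\Delta\tilde\theta$ will be handled by one integration by parts to produce bilinear forms in $\nabla\phi,\nabla v,\nabla\tilde\theta$, then bounded by H\"older and Young using $\|\tilde\theta\|_{L^\infty},\|v\|_{L^\infty},\|\phi\|_{L^\infty}\leq C\delta$. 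The transport-type term $2\int\phi\,\nabla\tilde\theta\cdot\nabla v$ is bounded directly by $2\|\phi\|_{L^\infty}\|\nabla\tilde\theta\|_{L^2}\|\nabla v\|_{L^2}$. Each such contribution is of the form $C\delta(\|\nabla v\|_{L^2}^2 + \|\nabla\tilde\theta\|_{L^2}^2 + \|\nabla\phi\|_{L^2}^2)$; the $\|\nabla\phi\|^2$ piece is absorbed into $2\|\nabla\phi\|^2$ on the left for $\delta$ sufficiently small. Combining these estimates with the $\tilde u$-cancellation yields the lemma.
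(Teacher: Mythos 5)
Your proposal is correct and follows essentially the same route as the paper: testing \eqref{eqn_1_2} with $-\phi$, extracting the coercive terms $\|v\|_{L^2}^2+2\|\nabla\phi\|_{L^2}^2$ from the linear part via $\Delta\phi=v$, and using exactly the paper's cancellation $\langle \nabla\tilde u\cdot\nabla\phi+\tilde u v,\phi\rangle=-\langle|\nabla\phi|^2,\tilde u\rangle$. The only cosmetic difference is that for the $\tilde\theta$-terms the paper groups them as $\Delta(\tilde\theta v)$ and integrates by parts once (bounding with $\|\tilde\theta\|_{L^\infty},\|v\|_{L^\infty}\leq C\delta$), whereas you invoke an auxiliary bound $\|\phi\|_{L^\infty}\leq C\delta$; both yield the same estimate.
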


\begin{proof}
Multiplying (\ref{eqn_1_2}) by $-\phi$, integrating over $\mathbb{R}^3$, and doing integration by parts, we obtain
\begin{align}\label{id_1_30}
&\quad\ \frac{1}{2} \frac{d}{dt} \| \nabla \phi \|_{L^2}^2 \notag\\
&= - \langle \Delta v - 2v , \phi \rangle - \langle \tilde{\theta} \Delta v + v \Delta \tilde{\theta} + 2 \nabla \tilde{\theta} \cdot \nabla v , \phi \rangle + \langle \nabla \tilde{u} \cdot \nabla \phi + \tilde{u} v , \phi \rangle.
\end{align}
By using the Poisson equation for $\phi$,
\begin{align}\label{id_1_31}
\langle \Delta v - 2v , \phi \rangle &= \langle \Delta (\Delta \phi) - 2\Delta \phi , \phi \rangle \notag\\
&= \| \Delta \phi \|_{L^2}^2 + 2 \| \nabla \phi \|_{L^2}^2 = \| v \|_{L^2}^2 + 2 \| \nabla \phi \|_{L^2}^2.
\end{align}
Next,
\begin{align}\label{id_1_32}
\left| \langle \tilde{\theta} \Delta v + v \Delta \tilde{\theta} + 2 \nabla \tilde{\theta} \cdot \nabla v , \phi \rangle \right| &= \left| \langle \Delta (\tilde{\theta} v) , \phi \rangle \right| \notag\\
&\leq \left| \langle \nabla \tilde{\theta} \cdot \nabla \phi , v \rangle \right| + \left| \langle \nabla v \cdot \nabla \phi , \tilde{\theta} \rangle \right| \notag\\
&\leq \| v \|_{L^\infty} \| \nabla \tilde{\theta} \|_{L^2} \| \nabla \phi \|_{L^2} + \| \tilde{\theta} \|_{L^\infty} \| \nabla v \|_{L^2} \| \nabla \phi \|_{L^2} \notag\\
&\leq C\delta \left( \| \nabla (v ,\tilde{\theta}) \|_{L^2}^2 + \| \nabla \phi \|_{L^2} \right).
\end{align}
Again, by using the Poisson equation for $\phi$, integration by parts yields
\begin{align*}
\langle \nabla \tilde{u} \cdot \nabla \phi + \tilde{u} v , \phi \rangle = \langle \nabla \tilde{u} \cdot \nabla \phi + \tilde{u} \Delta \phi , \phi \rangle = - \langle | \nabla \phi |^2 , \tilde{u} \rangle,
\end{align*}
which gives
\begin{align}\label{id_1_33}
\left| \langle \nabla \tilde{u} \cdot \nabla \phi + \tilde{u} v , \phi \rangle \right| \leq \| \tilde{u} \|_{L^\infty} \| \nabla \phi \|_{L^2}^2 \leq C\delta \| \nabla \phi \|_{L^2}^2.
\end{align}
Putting (\ref{id_1_31})--(\ref{id_1_33}) into (\ref{id_1_30}), the proof of Lemma \ref{nabla_phi_est} is completed.
\end{proof}

Proposition \ref{prop_H2_est} is proved by combining Lemmas \ref{lem_L2_est}--\ref{nabla_phi_est}. As a consequence, we complete the proof of Theorem \ref{main_theorem}.

\section{Appendix}

In this section, we present some detailed calculation about the variantional principle used in the model derivation.

\subsection{Least Action Principle}\label{ap_lap}

To obtain the conservative forces in \eqref{conservative force}, integrating with respect to $t$, on both sides of \eqref{ma_con} and \eqref{energy_con}, we have the relation between the state variables $(p,n,e)$ and the flow-maps $(J^p,J^n,J^e)$,
\begin{equation*}
\begin{cases}
\displaystyle p(x,t) = p(x,0) - \nabla \cdot J^p(x,t),\\[3mm]
\displaystyle n(x,t) = n(x,0) - \nabla \cdot J^n(x,t),\\[3mm]
\displaystyle e(x,t) = e(x,0) - \nabla \cdot J^e(x,t).
\end{cases}
\end{equation*}
By definition, $\displaystyle\theta = \frac{e-(p-n)\phi/2}{c_p p + c_n n}$, $\displaystyle \phi(x,t) = \int_{\mathbb{R}^3}  \frac{p(y,t)-n(y,t)}{4\pi |x-y|}dy$. So, their variations satisfy,
\begin{equation*}
\begin{cases}
\displaystyle\delta p(x,t) = -\nabla \cdot \delta J^p,\\[3mm]
\displaystyle\delta n(x,t) = -\nabla \cdot \delta J^n,\\[3mm]
\displaystyle\delta e(x,t) = -\nabla \cdot \delta J^e,\\[3mm]
\displaystyle\delta \phi(x,t) = \int_{\mathbb{R}^3} \frac{\delta p(y,t)-\delta n(y,t)}{4\pi |x-y|}dy,\\[3mm]
\displaystyle \delta \theta(x,t) = \frac{\delta e-(\delta p-\delta n) \phi/2 -(p-n)\delta \phi /2}{c^p p + c^n n} - \theta \frac{c^p \delta p + c^n \delta n}{c^p p + c^n n}.
\end{cases}
\end{equation*}
Unless marked as a function of $(y,t)$, these variables are functions of $(x,t)$. Plugging into \eqref{entropy}, the variation of entropy is,
\begin{align*}
\delta S  =& -\int_{\mathbb{R}^3} \left[[\log p - c^p \log  \theta  + 1] \delta p + [\log n - c^n \log  \theta  + 1] \delta n - \frac{c_p p + c_n n}{ \theta } \delta  \theta  \right]dx,\\
=& \int_{\mathbb{R}^3} \left[\nabla[\log p - c^p \log  \theta  +\frac{\phi}{2\theta}+ \int_{\mathbb{R}^3} \frac{p(y,t)-n(y,t)}{8\pi \theta(y) |x-y|}dy] \cdot \delta J^p(x,t) \right. \nonumber \\ 
 &\quad \left. + \nabla [\log n - c^n \log  \theta  -\frac{\phi}{2\theta}- \int_{\mathbb{R}^3} \frac{p(y,t)-n(y,t)}{8\pi \theta(y) |x-y|}dy] \cdot \delta J^n(x,t) - \nabla \frac{1}{ \theta } \cdot \delta J^e \right] dx.
\end{align*}
So, the variation of the total action,
\begin{align*}
&\quad\ \delta \mathcal{A}[J^p,J^n,J^e] \\
&= \int_0^\tau \int_{\mathbb{R}^3} \left[\nabla \left(\log p - c^p \log  \theta  +\frac{\phi}{2\theta}+ \int_{\mathbb{R}^3} \frac{p(y,t)-n(y,t)}{8\pi \theta(y) |x-y|}dy\right) \cdot \delta J^p(x,t) \right. \nonumber \\ 
&\qquad\qquad\qquad + \nabla \left(\log n - c^n \log  \theta  -\frac{\phi}{2\theta}- \int_{\mathbb{R}^3} \frac{p(y,t)-n(y,t)}{8\pi \theta(y) |x-y|}dy\right) \cdot \delta J^n(x,t) \\
&\qquad\qquad\qquad\qquad\qquad\qquad\qquad\qquad\qquad\qquad\qquad\qquad\qquad\quad \left. - \nabla \frac{1}{ \theta } \cdot \delta J^e(x,t) \right] dxdt.
\end{align*}
This leads to the conservative forces in \eqref{conservative force}. Altough we used the specific form of the pairwise Coulomb potential, the same approach can also be applied to other pariwise interactions.
\begin{rmk}
Alternatively we can define $Q_t = q$ instead of $J^e_t = j^e$ to describe the energy flow-map. However, this would result in tedious computation in the Least Action Principle. The difference between these two approaches, using $Q$ or $J^e$, is in fact a substitution of variables.
\end{rmk}

\subsection{Maxium Dissipation Principle}\label{ap_mdp}

To obtain the dissipative forces in \eqref{dissipative force}, using the definition of \eqref{energy_con}, we have,
\begin{eqnarray*}
\displaystyle q(x,t) &=&   j^e(x,t) - [(c^p+1) \theta +\phi] j^p(x,t) +[(c^n+1) \theta-\phi] j^n(x,t) \nonumber\\
&&\quad +  [\phi(x,t) \nabla \phi_t(x,t)-\phi_t(x,t)\nabla\phi(x,t)]/2,\\
&=&j^e(x,t) - [(c^p+1) \theta +\phi] j^p(x,t) +[(c^n+1) \theta-\phi] j^n(x,t) \nonumber\\
&&\quad -  \frac{1}{2}\int_{\mathbb{R}^3} \left(\frac{\phi(x,t) (x-y)}{4\pi|x-y|^{3}} - \frac{\nabla_x \phi(x,t)}{4\pi |x-y|} \right) \nabla_y \cdot [j^p(y,t) - j^n(y,t)]    dy.
\end{eqnarray*}
So, the variation of the entropy production in \eqref{entropy_production} as a functional of the fluxes,
\begin{align*}
\delta \triangle &= \int_{\mathbb{R}^3} \left[\frac{2j^p \cdot \delta j^p}{ D^p p \theta}+\frac{2j^n\cdot \delta j^n}{ D^n n \theta}+\frac{2q\cdot \delta q}{k \theta^2}\right]dx,\\
&= \int_{\mathbb{R}^3} \left[\left(\frac{2j^p }{ D^p p \theta}-[(c^p+1)\theta+\phi]\frac{2q}{k \theta^2} \right.\right.\\
&\qquad\qquad\qquad \left.\left. + \nabla \int_{\mathbb{R}^3} \frac{q(y,t)}{k \theta^2(y,t)} \left(\frac{\phi(y,t) (y-x)}{4\pi|x-y|^{3}} - \frac{\nabla_y \phi(y,t)}{4\pi |x-y|} \right) dy\right)\cdot \delta j^p \right. \nonumber\\
&\qquad\qquad + \left(\frac{2j^n }{ D^n n \theta}-[(c^n+1)\theta-\phi]\frac{2q}{k \theta^2} \right.\\
&\qquad\qquad\qquad \left. - \nabla \int_{\mathbb{R}^3} \frac{q(y,t)}{k \theta^2(y,t)} \left(\frac{\phi(y,t) (y-x)}{4\pi|x-y|^{3}} - \frac{\nabla_y \phi(y,t)}{4\pi |x-y|} \right) dy\right)\cdot \delta j^n \\
&\qquad\qquad\qquad\qquad\qquad\qquad\qquad\qquad\qquad\qquad\qquad\qquad\qquad \left. +\frac{2q}{k \theta^2} \cdot \delta j^e\right]dx.
\end{align*}

\subsection{Energy conservation in PNPF model}\label{ap_ec}
To verify that \eqref{PNPF} satisfies the First Law of Thermodynamics,
\begin{eqnarray*}
&&\frac{\partial}{\partial t} \left[(c^p p+c^n n)\theta + \frac{(p-n)\phi}{2}\right] \\
&=& (c^p p_t+c^n n_t)\theta + (c^p p+c^n n)\theta_t + \frac{(p_t-n_t)\phi}{2} + \frac{(p-n)\phi_t}{2}, \\
&=& -[c^p \nabla \cdot (pv^p)+c^n \nabla \cdot (nv^n)]\theta + (c^p p+c^n n)\theta_t - \frac{ \phi \nabla^2\phi_t}{2} - \frac{\phi_t\nabla^2\phi}{2}, \\
&=& -\nabla \cdot (c^p  p\theta v^p+c^n n\theta v^n) + \nabla \cdot k\nabla \theta +\frac{p|v^p|^2}{D^p} +\frac{n|v^n|^2}{D^n} - p\theta \nabla \cdot v^p - n\theta \nabla \cdot v^n \\
&&\quad -\phi \nabla^2 \phi_t -\frac{1}{2}\nabla \cdot (\phi_t \nabla \phi - \phi \nabla \phi_t), \\
&=&  -\nabla \cdot (c^p  p\theta v^p+c^n n\theta v^n) - \nabla \cdot (p\theta v^p + n\theta v^n)  - \nabla \cdot (p\phi v^p - n\phi v^n) \\
&&\quad -\frac{1}{2}\nabla \cdot (\phi_t \nabla \phi - \phi \nabla \phi_t) -\nabla \cdot q.
 \end{eqnarray*}
So the total energy $\displaystyle E(t)=\int_{\mathbb{R}^3} e(x,t) dx$ is conserved:
\begin{align*}
\displaystyle \frac{d}{dt} E(t) =0.
\end{align*}

\section{Concluding Remarks}

In this paper, we extend the classical variational principles to describe both the charge transport and temperature evolution, and obtain a Poisson--Nernst--Planck--Fourier system. This approach guarantees the thermodynamic consistency of the model, and can be applied to a variety of non-isothermal complex fluid systems. We then use the energy method to prove the global well-posedness of the PNPF system, under the assumption of the initial data to be sufficiently close to the equilibrium state. 

Future work will include the different form of Clausius-Duhem inequalities, which could result in different set of constitutive relations. On the other hand, the global well-posedness with arbitrary initial condition is challenging, because the PNPF system is not strictly parabolic and there is no direct maximum principle can be applied to obtain the lower bound for temperature, which requires further analysis.

\section{Acknowledgement}

C.-Y. Hsieh and P. Liu would like to thank the Department of Applied Mathematics of the Illinois Institute of Technology for hosting their visits and providing the great working environment.

\end{document}